\documentclass{amsart}

\usepackage{amsfonts, amsmath, amssymb, pifont}
\usepackage{mathtools}
\usepackage{hyperref}
\usepackage{enumitem}
\usepackage{todonotes}

\usepackage{xcolor}
\usepackage{tikz}
\usetikzlibrary{calc}

\newlength{\superthick}
\newlength{\cornerradius}
\tikzstyle{corner}=[rounded corners=\cornerradius]
\tikzstyle{string}=[line width=\superthick]
\tikzstyle{dot}=[circle, inner sep=0pt, minimum size=7pt]

\usepackage[style=ams-alphabetic,
	        backend=bibtex,
			url=false,
			doi=false,
			isbn=false,
			giveninits=true,
            maxbibnames=4,
            maxalphanames=4]{biblatex}

\renewbibmacro{in:}{%
	\ifentrytype{article}{}{\printtext{\bibstring{in}\space}}}

\DeclareFieldFormat
[misc,article,inbook,incollection,inproceedings,patent,thesis,unpublished]
{title}{#1\isdot}

\AtEveryBibitem{%
  \ifentrytype{book}{%
    \clearfield{pages}%
  }{%
  }%
}

\renewbibmacro{volume+number+eid}{%
	\iffieldundef{number}{}{\printtext[parens]{%
			\printfield{number}}}
	\printfield[bold]{volume}
	\setunit{\space}%
	\printfield{eid}}

\renewbibmacro*{series+number}{%
	\printfield{series}%
	\setunit*{\addspace}%
	\printfield{number}%
	\iffieldundef{note}{\newunit\nopunct}{\newunit}}

\DeclareFieldFormat[article]{pages}{#1}

\renewbibmacro*{publisher+location+date}{%
	\printlist{location}%
	\iflistundef{publisher}
	{\setunit*{\addcomma\space}}
	{\setunit*{\addcolon\space}}%
	\printtext[parens]{\printlist{publisher}%
		\setunit*{\addcomma\space}%
		\usebibmacro{date}}
	\newunit}

\renewbibmacro*{location+date}{%
	\printlist{location}%
	\setunit*{\space}%
	\printtext[parens]{\usebibmacro{date}}%
	\newunit}

\NewBibliographyString{submitted}
\DefineBibliographyStrings{english}{%
  submitted = {submitted},
}

\theoremstyle{plain}
\newtheorem{thm}{Theorem}[section]
\newtheorem*{thm*}{Theorem}
\newtheorem{clm}[thm]{Claim}
\newtheorem{lem}[thm]{Lemma}
\newtheorem*{lem*}{Lemma}

\newtheorem*{conj*}{Conjecture}
\newtheorem{cor}[thm]{Corollary}
\newtheorem*{cor*}{Corollary}
\newtheorem{prop}[thm]{Proposition}
\newtheorem{thmletter}{Theorem}[section]

\theoremstyle{definition}
\newtheorem{defn}[thm]{Definition}
\newtheorem{exam}[thm]{Example}

\theoremstyle{remark}
\newtheorem{rem}[thm]{Remark}

\numberwithin{equation}{section}

\newcommand{\defnemph}[1]{\emph{#1}}

\newcommand{\ZZ}{\mathbb{Z}}
\newcommand{\QQ}{\mathbb{Q}}

\DeclareMathOperator{\Hom}{Hom}

\DeclareMathOperator{\Sym}{Sym}

\begin{document}

\title[Remarks on realizations]{Some remarks on realizations defining diagrammatic Hecke categories}
\author{Amit Hazi}
\address{School of Mathematics\\
University of Leeds\\
Leeds\\
LS2 9JT\\
United Kingdom}

\email{\href{mailto:A.Hazi@leeds.ac.uk}{A.Hazi@leeds.ac.uk}}

\begin{abstract}
Every diagrammatic Hecke category is constructed from a realization, which generalizes the notion of a reflection representation of a Coxeter group. 
Common assumptions on realizations to ensure that the resulting categories are well behaved include Demazure surjectivity and the parabolic property (for anti-spherical quotients). 
We show that these assumptions are in fact unnecessary. 
We also give a non-inductive description of the rotational scalars for unbalanced realizations, as well as a simpler criterion to check for balancedness. 
\end{abstract}

\subjclass[2020]{20C08 (primary); 20G15, 20F55 (secondary).}

\maketitle

\section*{Introduction}

Let $(W,S)$ be a Coxeter system, and let $\mathbb{H}:=\mathbb{H}(W,S)$ be the corresponding Iwahori--Hecke algebra. 
A \defnemph{Hecke category} is a graded additive monoidal category $\mathcal{H}$ which categorifies $\mathbb{H}$. 
Such categories (and their categorical actions) are ubiquitous in Lie theory. 
In characteristic $0$ their behavior is often determined by Kazhdan--Lusztig theory, which gives insight into how Kazhdan--Lusztig polynomials control the categories they act on (e.g.~the Kazhdan--Lusztig conjectures for the BGG category $\mathcal{O}$).
In positive characteristic, Hecke categories are used to \emph{define} $p$-Kazhdan--Lusztig theory, providing a replacement for Kazhdan--Lusztig polynomials with applications to modular representation theory. 

As with the Hecke algebra itself, the most general construction of a Hecke category is via a (diagrammatic) presentation in terms of generators and relations due to Elias--Williamson \cite{EW-SoergelCalc}. 
The fundamental input data for such a presentation is a \defnemph{realization} of the Coxeter system $(W,S)$. 
Realizations generalize the notion of a reflection representation (specifically that of a \defnemph{reflection faithful} representation) with labeled roots and coroots. 
Such representations are the foundation for Soergel's original algebraic categorification of the Hecke algebra \cite{soergel-unzerlegbareBimoduln}.
In the diagrammatic setting, realizations were first introduced by Elias in \cite{dihedralcathedral}. 

In many applications the underlying realization is the action of a (finite or affine) Weyl group on a Cartan subalgebra, which is almost always well behaved. 
However, finding the correct completely general definition is surprisingly subtle. 
For example, 7 years after introducing general realizations Elias--Williamson showed that the original definition contained an incorrect inference \cite{ew-loccalc}; this error was subsequently corrected by the author in \cite{2colJW}. 
Moreover, in the literature the relatively weak condition of \defnemph{Demazure surjectivity} is often freely assumed to ensure that the corresponding diagrammatic category actually categorifies the Hecke algebra. 
For \defnemph{anti-spherical} quotients of Hecke categories, Libedinsky--Williamson require an additional strong assumption (the \defnemph{parabolic property}) which often fails in positive characteristic \cite{LW-antispher}. 
Both of these conditions add complexity to an already complicated theory.

The goal of this paper is to simplify the theory of diagrammatic Hecke categories by showing that these assumptions are broadly unnecessary. 
Our main results are as follows.

\begin{thmletter}[{Corollary \ref{cor:cinchedSoergelcatthm}}] \label{thm:intro1}
The diagrammatic category constructed by Elias--Williamson always admits a quotient which categorifies the Hecke algebra.
\end{thmletter}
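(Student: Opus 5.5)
The plan is to trace exactly where Demazure surjectivity enters the Elias--Williamson argument and to quotient out the defect. Let $\mathfrak{h}$ be a realization over $\Bbbk$, and let $R=\Sym(\mathfrak{h}^*)$ be graded with $\mathfrak{h}^*$ in degree $2$. Demazure surjectivity says that $\partial_s\colon R\to R^s$ hits $1$ for every $s\in S$, i.e.\ that some $\delta_s$ with $\langle \alpha_s^\vee,\delta_s\rangle=1$ exists (automatic if $2$ is invertible, or if $\alpha_s^\vee$ is surjective). Elias--Williamson use it in exactly two places. The first is to show that the Bott--Samelson morphism spaces are free left $R$-modules with the double leaves basis. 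The second is the splitting $B_sB_s\cong B_s(1)\oplus B_s(-1)$, which needs an element $\delta_s$ giving the idempotent decomposition.

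Without the hypothesis, consider the ideal of $R$ generated by the image of $\alpha_s^\vee$ for all $s$. After base change this ideal is proper and we can work over a quotient ring, but that changes $R$. The cleaner idea is a \emph{cinched} quotient of the Elias--Williamson category $\mathcal{H}$. Define $\mathcal{H}'$ as the quotient of $\mathcal{H}$ by the tensor ideal generated by the "cinched" morphism $B_sB_s\to B_sB_s$, meaning the composition through the unit, minus the appropriate polynomial expressions. Equivalently, we adjoin formally the relations that force $B_s\otimes B_s$ to split, replacing the degree-$2$ part of $R$ acting on the $s$-strand by $R\otimes_{\Sym}\Bbbk[\delta_s]$. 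Concretely, I would enlarge the realization: let $\tilde{\mathfrak{h}}=\mathfrak{h}\oplus \Bbbk^{S'}$, where $S'$ is the set of $s$ failing surjectivity, and extend $\alpha_s$ so that new dual vectors $\delta_s$ pair to $1$. Then $\tilde{\mathfrak{h}}$ is Demazure surjective. The realization axioms (rank-two Cartan matrix conditions and the two-colored quantum numbers from \cite{2colJW}) depend only on the pairings $\langle\alpha_s^\vee,\alpha_t\rangle$, so they are unchanged. Hence $\tilde{\mathcal{H}}$ is defined and categorifies $\mathbb{H}$.

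Next, compare the two categories. There is a monoidal functor $\mathcal{H}\to\tilde{\mathcal{H}}$, extension of scalars along $R\to\tilde R$. Its image generates a subcategory whose Hom spaces are $\tilde R$-spans of images of double leaves. Since $\tilde R$ is free over $R$ (a polynomial extension), these Hom spaces are free. Taking degree-zero "$R$-part" appropriately, define the quotient of $\mathcal{H}$ by the kernel of this functor, and call it the cinched category. Its morphism spaces embed into free $\tilde R$-modules with double leaves basis. One then shows that double leaves remain linearly independent over $R$ in the quotient, and that they span modulo the kernel by the usual Elias--Williamson spanning argument, which never uses surjectivity. The Karoubi envelope of the quotient then has split Grothendieck group $\mathbb{H}$: indecomposables are indexed by $W$ via the standard triangularity argument with the localized/standard filtration, and the character map is an isomorphism.

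I expect the main obstacle to be showing that the kernel does not destroy the splitting $B_sB_s\cong B_s(1)\oplus B_s(-1)$ in the quotient itself, since $\delta_s$ lives only in $\tilde R$. I would handle this by working in the Karoubi envelope of the full subcategory of $\tilde{\mathcal{H}}$ on Bott--Samelson objects, but with coefficients base-changed back along a retraction $\tilde R\to R$. This does not exist as an algebra map compatible with $\partial_s$, so I would first try to show directly that the Grothendieck group computation only needs the ranks of graded Hom spaces (Soergel's Hom formula), which holds for the quotient by the freeness above. Categorification then follows from the rank formula plus Krull--Schmidt, without needing an explicit splitting.
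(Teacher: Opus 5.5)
There is a genuine gap, and it sits in the spanning step. Your quotient is legitimate: it is the image of $\mathcal{D}_{\rm BS}$ in the Elias--Williamson category $\tilde{\mathcal D}_{\rm BS}$ of your enlarged, coroot-surjective realization. Linear independence of double leaves over $R$ there is inherited from $\tilde{\mathcal D}_{\rm BS}$.

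\textbf{The gap.} You assert that double leaves span modulo the kernel ``by the usual Elias--Williamson spanning argument, which never uses surjectivity''. That argument does use coroot surjectivity. Elias--Williamson's Claim~7.10 rewrites $\mathrm{LL}_{\underline{x},\mathbf{e}}\otimes\mathrm{dot}_s$ when $ws<w$, and its proof needs an element $\delta_s$ with $\partial_s(\delta_s)=1$. Such a $\delta_s$ exists in $\tilde R$ but not in $R$. So running the argument in $\tilde{\mathcal D}_{\rm BS}$ only shows that each image Hom space is an $R$-module sandwiched between $\bigoplus R\cdot\mathrm{DL}$ and $\bigoplus \tilde R\cdot\mathrm{DL}$. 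That gives no a priori control over its graded rank. Hence neither freeness over $R$ nor Soergel's Hom formula follows, and your fallback ``rank formula plus Krull--Schmidt'' has nothing to run on.

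This is not a technicality. Set $e_1=\mathrm{split}\circ(\mathrm{enddot}\otimes\mathrm{id})$ and $e_2=(\mathrm{startdot}\otimes\mathrm{id})\circ\mathrm{merge}$ on $B_sB_s$. Then the relation \eqref{eq:cinching} says exactly $(1-e_1)(1-e_2)=0$. Without coroot surjectivity this is an independent relation, not a consequence of the relations of $\mathcal{D}_{\rm BS}$, and in its absence double leaves need not span.

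\textbf{The missing idea} is to use this cinching relation. It holds in $\tilde{\mathcal D}_{\rm BS}$, since it follows from the $\delta_s$-splitting of $B_sB_s$ there, so it holds in your quotient, yet it involves no $\delta_s$. Its dotted form \eqref{eq:dottedcinch} replaces Claim~7.10. Applied to $\mathrm{LL}_{\underline{x},\mathbf{e}}\otimes\mathrm{dot}_s$, it produces three terms, all with coefficients in $R$:
\begin{enumerate}
\item a term factoring through $B_{\underline{ws}}$, hence lower;
\item a light leaf times $\alpha_s$;
\item a term in which the dot is pushed down through rex moves by the Jones--Wenzl relation. The dot either reaches the bottom, turning a $\mathrm{U}1$ strand into $\mathrm{U}0$, or meets a trivalent vertex and forms a birdcage, which is again part of a light leaf.
\end{enumerate}
The paper imposes this relation directly to define $\mathcal{H}_{\rm BS}$. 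A posteriori your image category coincides with it. The splitting $B_sB_s\cong B_s(1)\oplus B_s(-1)$ that you single out as the main obstacle is then immediate, either from cinching or from the double leaves basis.

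\textbf{A secondary gap.} Your $\tilde{\mathfrak h}$ is only coroot surjective, not Demazure surjective as you claim. A morphism of realizations sends a zero root to zero, so some $\alpha_s$ may still vanish in $\tilde{\mathfrak h}$; this is possible when $2=0$ in $\Bbbk$. In that case you cannot simply cite Elias--Williamson or the localization argument for linear independence in $\tilde{\mathcal D}_{\rm BS}$. The paper handles this by adjoining $W$-fixed vectors $\varepsilon_s$ to serve as nonzero roots and then changing scalars back.
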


\begin{thmletter}[{Corollary \ref{cor:antispherSoergelcatthm}}] \label{thm:intro2}
The anti-spherical quotient of a Hecke category (\textit{\`{a} la} Libedinsky--Williamson) always categorifies the anti-spherical module for the Hecke algebra.
\end{thmletter}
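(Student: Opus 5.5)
The strategy is to transfer the Libedinsky--Williamson argument, which needs the parabolic property, to the \emph{cinched} quotient $\overline{\mathcal{H}}$ of the Elias--Williamson category supplied by Theorem~\ref{thm:intro1}. Recall that the anti-spherical quotient is obtained from $\mathcal{H}$ by (a) passing to the right $\mathcal{H}$-module category given by tensoring on the left with the augmentation module $R/(V^\ast)$ (i.e.\ killing all polynomials in $V^*$ on the left of diagrams), and (b) further quotienting by the morphisms factoring through Bott--Samelson objects $B_{\underline{w}}$ whose expression $\underline{w}$ begins with a simple reflection in the finite parabolic $S_f$. The goal is to show that the split Grothendieck group of the resulting category $\mathcal{M}$ is isomorphic, as a right $\mathbb{H}$-module, to the anti-spherical module $\mathbb{M}^{asph} = \mathrm{sgn}\otimes_{\mathbb{H}_f}\mathbb{H}$, with indecomposables corresponding to ${}^fW$.

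The key steps, in order, are as follows.

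First, since the augmentation kills everything in degree $>0$ from $R$, the morphisms in $\mathcal{H}$ that are killed when passing from $\mathcal{H}$ to $\overline{\mathcal{H}}$ (the ``cinched'' ideal) already become zero in $\mathcal{M}$, or at least the anti-spherical construction factors through $\overline{\mathcal{H}}$. I would verify this by checking that the generators of the cinching ideal are multiples of elements of $V^\ast$ (or of elements of $R^{s}$ acting on the left), which vanish after applying the augmentation.

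Second, I would replace the original realization by the ``cinched'' one used in Corollary~\ref{cor:cinchedSoergelcatthm}, which is Demazure surjective for which $\overline{\mathcal{H}}$ is the honest Elias--Williamson category.

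Third, I would redo the LW light leaves argument: the images of double light leaves $\mathbb{LL}$ whose top or bottom passes through an element not in ${}^fW$ span the ideal in (b). The remaining ones, indexed by ${}^fW$, give a basis of the Hom spaces in $\mathcal{M}$. Spanning follows formally from the cellular structure of $\overline{\mathcal{H}}$. Linear independence is where LW used the parabolic property: it ensured that the finite parabolic subrealization is itself well behaved, so that the ideal generated by $S_f$-starting objects is exactly the span of the appropriate light leaves. I would instead compute the rank of the Hom spaces via a localization/specialization argument, or via a standard-filtration argument using the character map $\mathrm{ch}$ from Theorem~\ref{thm:intro1}. The aim is to show that the graded rank of $\Hom_{\mathcal{M}}(B_{\underline{x}},B_{\underline{y}})$ is at least the standard form $\langle \underline{H}_{\underline{x}}, \underline{H}_{\underline{y}}\rangle$ in $\mathbb{M}^{asph}$.

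Finally, once the Hom spaces have the correct graded ranks, the standard argument (idempotent lifting in the Krull--Schmidt category over a complete local ring or field, together with the nondegeneracy of the local intersection forms) gives the indecomposables $B_w$ for $w\in{}^fW$ and the isomorphism of Grothendieck groups.

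I expect the main obstacle to be the linear independence in the third step. Without the parabolic property, the augmentation may identify light leaves in unexpected ways. The first thing I would try is to show that the cinched realization automatically satisfies a weak form of the parabolic property restricted to $S_f$, namely that the finite parabolic subcategory of $\overline{\mathcal{H}}$ has the correct Hom ranks after augmentation.
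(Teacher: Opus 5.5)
\textbf{There is a genuine gap, and it is exactly the step you flag as the main obstacle.} Nothing in the proposal proves that the anti-spherical double leaves remain linearly independent in the anti-spherical quotient, i.e.\ the lower bound on graded ranks. Spanning is the easy half. The whole content of the statement is that the ideal generated by $I$-sequences contains no nontrivial combination of anti-spherical double leaves.

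The routes you suggest do not reach this. The character map of Theorem~\ref{thm:intro1} computes ranks in $\mathcal{H}$, not in a quotient of it. The ideal generated by $I$-sequences is also not a cell ideal for the double leaves basis: it already contains the barbell $\alpha_s\cdot\mathrm{id}$ for $s\in I$. So cellularity gives nothing beyond spanning.

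Your fallback rests on a misreading. Corollary~\ref{cor:cinchedSoergelcatthm} does not replace $V$ by a ``cinched'', Demazure surjective realization; cinching is an extra relation imposed on the category built from the \emph{same} $V$. In any case, Demazure surjectivity is not what Libedinsky--Williamson's independence argument needs. It needs the parabolic property, a separate condition on the realization that cinching does not touch and that often fails in positive characteristic. Controlling the finite parabolic subcategory says nothing about how the ideal interacts with the rest of the category.

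Your first step is also unnecessary and wrongly justified. $\mathcal{N}_{\rm BS}$ is by definition a quotient of the cinched $\mathcal{H}_{\rm BS}$, with no augmentation. Moreover, the cinching relation expresses the degree-zero morphism $\mathrm{id}_{B_sB_s}$, so it is not a multiple of positive-degree polynomials and is not forced by augmentation.

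The missing idea is to \emph{transfer} Libedinsky--Williamson's theorem by deformation rather than re-prove it. The paper proceeds as follows.
\begin{enumerate}
\item Write $V=U\oplus K$, with $U$ the root subrealization and $K$ free on $f_1,\dots,f_r$.
\item Let $\mathbb{O}\subset\overline{\QQ}$ be generated by the $4\cos^2(\pi/m_{st})$, and set $\Bbbk'=\mathbb{O}[x_{s,t},y_{s,i}]/(x_{s,t}x_{t,s}-4\cos^2(\pi/m_{st}))$.
\item Form the generic realization $V'$ over $\Bbbk'$ with free basis $\{\alpha'_s\}\cup\{f'_i\}$ and pairings $\langle(\alpha'_s)^\vee,\alpha'_t\rangle=x_{s,t}$ and $\langle(\alpha'_s)^\vee,f'_i\rangle=y_{s,i}$.
\item Specializing $x_{s,t}\mapsto-2\cos(\pi/m_{st})$ gives, on the root span, the geometric representation, which is faithful. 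Hence $V'$ is faithful and has the parabolic property, and \cite[Theorem~5.3]{LW-antispher} gives anti-spherical double leaves bases over $\Bbbk'$.
\item Specializing instead $x_{s,t}\mapsto\langle\alpha_s^\vee,\alpha_t\rangle$ and $y_{s,i}\mapsto\langle\alpha_s^\vee,f_i\rangle$ recovers $V$. Bases of these free $\Hom$-spaces remain bases after specialization, which gives Theorem~\ref{thm:antispherLLbasis}.
\end{enumerate}
Your final step (idempotent lifting and ranks of local intersection forms, as in Libedinsky--Williamson's Theorem~6.2) then finishes the proof as you describe.
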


In addition, we provide an easier way to check for \defnemph{balancedness}, another assumption on the realization which simplifies some of the notation in the diagrammatic Hecke category.

\begin{thmletter}[{Corollary \ref{cor:balancedcriterion}}]
A realization is balanced if and only if for all distinct $s,t \in S$ for which $st$ has odd order $m$, the $(s,t)$th entry of the Cartan matrix is a root of the minimal polynomial of $-2\cos(\pi/m)$ over $\ZZ$. 
\end{thmletter}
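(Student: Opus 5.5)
We reduce the statement to a rank-two computation via the two-colored quantum numbers. Recall that for a realization with Cartan matrix entries $a_{st}=\langle \alpha_s^\vee,\alpha_t\rangle$ and $a_{ts}$, the two-colored quantum numbers $[k]_s,[k]_t$ are defined by the recursion $[0]=0$, $[1]=1$, $[2]_s=-a_{st}$, $[2]_t=-a_{ts}$, $[k+1]_s=[2]_s[k]_t-[k-1]_s$, and symmetrically for $t$. The realization is balanced exactly when, for every pair $s,t$ with $m=m_{st}<\infty$, the rotation of the $2m$-valent vertex acts by $1$. The rotational scalars for the pair are governed by $[m-1]_s$ and $[m-1]_t$, and balancedness for the pair means $[m-1]_s=[m-1]_t=1$.

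\textbf{Even $m$.} Here $[m-1]_s=[m-1]_t$ automatically, since odd-index two-colored quantum numbers are symmetric in $s$ and $t$. The (corrected) realization axioms require $[m]_s=[m]_t=0$, which forces this common value to be $\pm1$ in the appropriate sense. I expect the earlier part of the paper to show that even-$m$ pairs never obstruct balancedness, or that they impose no condition beyond the realization axioms; I will take this from the non-inductive description of the rotational scalars given earlier. So only odd $m$ matters.

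\textbf{Odd $m$.} First, using the symmetry $[k]_s=[k]_t$ for odd $k$ (with even-index quantum numbers differing by the factor $a_{st}/a_{ts}$), I will reduce to a single-variable problem. Set $x=[2]_s$. For odd $m$ one can show $a_{st}=a_{ts}$ whenever $[m]=0$ holds in the relevant sense, or else rescale. Then $[k]$ becomes the Chebyshev-type polynomial $U_{k-1}(x/2)$.

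Next I will use the identity
\[
[m-1]^2-1=[m][m-2].
\]
Combined with the axiom $[m]=0$, this gives $[m-1]=\pm1$. I then need to determine which sign occurs. Write $[m]=U_{m-1}(x/2)$. Its roots are $2\cos(\pi j/m)$ for $j=1,\dots,m-1$, and over $\ZZ$ it factors into the minimal polynomials of $2\cos(\pi j/m)$, taken over divisors.

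On a root $2\cos(\pi j/m)$ one computes
\[
[m-1]=\frac{\sin((m-1)\pi j/m)}{\sin(\pi j/m)}=(-1)^{j+1}.
\]
So $[m-1]=1$ exactly for odd $j$. Since $m$ is odd, $j\mapsto m-j$ swaps parity, and $2\cos(\pi(m-j)/m)=-2\cos(\pi j/m)$. Hence the odd-$j$ values of $x$ are the negatives of the even-$j$ values, that is, of the values $2\cos(2\pi k/m)$.

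It follows that $[m-1]=1$ holds exactly when $x$ is a root of the factor $\prod_{j\text{ odd}}(X-2\cos(\pi j/m))$. This factor has integer coefficients: it equals $(-1)^{(m-1)/2}\,$ times the polynomial $V(-X)$, where $V$ is the Chebyshev-type polynomial with $V(2\cos\theta)=\sin((m+1/2)\cdots)$.

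The statement singles out the minimal polynomial of $-2\cos(\pi/m)$, where $-a_{st}=x$, so $a_{st}=-x$. I then need to match the set of odd $j$ with the Galois orbit, together with the divisor pieces, of $2\cos(\pi/m)$. For non-prime $m$ this is a union over $d\mid m$ of the orbits of $2\cos(\pi/d)$, so I must reconcile this with "the minimal polynomial" in the statement. I expect the earlier results, or the definition of realization (for instance, requiring $[m]$ to vanish "exactly", that is, the pair generating a faithful dihedral group of order $2m$), to exclude roots coming from proper divisors.

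\textbf{Main obstacle.} The main obstacle is this last bookkeeping step. It requires carefully handling the case where $[m]$ vanishes only because $[d]$ does for some proper divisor $d$. It also requires working over a general ring rather than a field: there $x$ is not literally a complex root, and the argument must be phrased via polynomial identities such as the factorization $[m]=\pm P(x)P(-x)$ with $P$ the polynomial above, together with the realization axioms.
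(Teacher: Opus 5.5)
\textbf{Gap: the reduction of the odd case to one variable.} You claim that $a_{st}=a_{ts}$ follows from $[m]_s=[m]_t=0$ for odd $m$. This is false. For $m=3$ over $\QQ$, the entries $a_{st}=-2$, $a_{ts}=-1/2$ give a realization, since $\Psi_3(a_{st}a_{ts})=a_{st}a_{ts}-1=0$, and here $[2]_s=2$.

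Asymmetric entries like these are exactly the unbalanced realizations the criterion is about, and you cannot rescale them away. Since $[m-1]_s/[2]_s$ is a polynomial in $x_sx_t$, replacing $(x_s,x_t)$ by $(\lambda x_s,\lambda^{-1}x_t)$ multiplies $[m-1]_s$ by $\lambda$. So rescaling changes precisely the quantity you are testing. It would also require a square root of $a_{st}/a_{ts}$ in $\Bbbk$, which you have not produced.

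In general $[m-1]_s=x_s\cdot\frac{[m-1]}{[2]}(\sqrt{x_sx_t})$ depends on both entries. Your sign analysis of $[m-1]$ as a polynomial in $x=[2]_s$ is therefore valid only when $a_{st}=a_{ts}$.
\begin{itemize}
\item This is enough for ``balanced $\Rightarrow$ criterion'': if $[m-1]_s=[m-1]_t=1$, then $[m-1]_s/[2]_s=[m-1]_t/[2]_t$ forces $a_{st}=a_{ts}$.
\item For the converse, your argument only covers the case $a_{st}=a_{ts}$, which is not given. You must show that $\Psi'_m(-a_{st})=\Psi'_m(-a_{ts})=0$ forces $[m-1]_s=1$ without that assumption.
\item The hypothesis for a single ordered pair does not suffice. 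For $m=5$ with $\phi=2\cos(\pi/5)$, take $(-a_{st},-a_{ts})=(\phi,\phi^{-3})$. This is a realization with $\Psi'_5(-a_{st})=0$, yet $[4]_s=-\phi^2$.
\end{itemize}

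\textbf{What the paper does instead.} Theorem~\ref{thm:mminus1twocol} handles the asymmetric case directly. It works in $\ZZ[x_s,x_t]/(\Psi_m(x_sx_t))$ and uses that $\Psi'_m(x)\Psi'_m(-x)=\pm\Psi_m(x^2)$ divides $[m-1]^2-1$, both being even polynomials. From this it shows that $([m-1][2])(\sqrt{x_sx_t})$ is an invertible square root of $x_sx_t$ and a root of $\Psi'_m$. Hence $[m-1]_s=-a_{st}/\sqrt{a_{st}a_{ts}}$, with the sign fixed by Corollary~\ref{cor:psiprimerootsum}. The argument is purely polynomial, so it also removes your concern about working over general rings. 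The criterion is then read off by comparing $-a_{st}$ and $-a_{ts}$ with this distinguished square root.

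\textbf{Two further points.}
\begin{itemize}
\item The divisor bookkeeping you call the main obstacle disappears once you use the paper's axiom $\Psi_m(a_{st}a_{ts})=0$ instead of $[m]_s=[m]_t=0$. Under the weaker condition the statement is false: for $m=9$ over $\QQ$, $a_{st}=a_{ts}=-1$ gives $[8]_s=[8]_t=1$, although $\Psi'_9(1)=-3$.
\item The even case is not covered by the paper's theorem, which treats only odd $m$. The paper cites \cite[Lemma~6.25]{ew-loccalc} for it, so it cannot be taken from the non-inductive description as you propose.
\end{itemize}
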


This follows from a calculation of the general rotational scalar $[m-1]_s$ in Theorem \ref{thm:mminus1twocol}, which has applications to unbalanced realizations.

\subsection*{Acknowledgments}

The author would like to thank Chris Bowman, Maud De Visscher, and Emily Norton for inspiring this paper through many stimulating discussions on similar topics while working on \cite{BHN-modularWeylKac} and \cite{bdhn-hsp}.
The author is grateful for financial support from EPSRC (EP/W007509/1).

\section{Realizations from Cartan matrices}

In this section we will give a simplification of the usual definition of a realization, which will help us prove many of our later results in this paper. 

\begin{defn} \label{defn:rlz}
Let $\Bbbk$ be a commutative integral domain, and let $(W,S)$ be a Coxeter system.
\begin{enumerate}
 \item A \defnemph{$\Bbbk$-valued Cartan matrix} for $(W,S)$ is an $S \times S$ matrix $(a_{st})_{s,t \in S}$ with entries in $\Bbbk$ satisfying the following conditions:
 \begin{enumerate}[label=(\roman*)]
 \item $a_{ss}=2$ for all $s \in S$;
 
 \item For all distinct $s,t \in S$ such that $m_{st}<\infty$,
 \begin{equation} \label{eq:minpolycond}
 \begin{aligned}
 \Psi_{m_{st}}(a_{st} a_{ts})& =0 & &\text{if $m_{st}>2$,} \\
 a_{st}=a_{ts}& =0 & & \text{if $m_{st}=2$,}
 \end{aligned}
 \end{equation}
 where $\Psi_{m_{st}}$ denotes the minimal polynomial of $4\cos^2(\pi/m_{st})$ over $\QQ$ (see \cite[(15)]{2colJW}).
 \end{enumerate}
 
 \item A \defnemph{realization} of $(W,S)$ over $\Bbbk$ consists of a free finite-rank $\Bbbk$-module $V$ along with subsets $\{\alpha_s : s \in S\} \subset V$ and $\{\alpha_s^\vee : s \in S\} \subset V^* = \Hom(V,\Bbbk)$ (called simple roots and coroots respectively) such that the matrix $(\langle \alpha_s^\vee, \alpha_t\rangle)_{s,t \in S}$ is a $\Bbbk$-valued Cartan matrix of $(W,S)$.
 
 \item Let $(V,\{\alpha_s\},\{\alpha_s^\vee\})$ and $(V',\{\alpha'_s\},\{(\alpha'_s)^\vee\})$ be realizations of $(W,S)$ over $\Bbbk$. 
 A \defnemph{morphism of realizations} is a $\Bbbk$-linear map $\phi:V \rightarrow V'$ such that $\phi(\alpha_s)=\alpha'_s$ and $\phi^\ast((\alpha'_s)^\vee)=\alpha_s^\vee$ for all $s \in S$. 
 Let $U \leq V'$ denote the image of $\phi$, and identify its dual $U^\ast$ with the quotient $V'^\ast/U^\circ$ (where $U^\circ \leq V'^\ast$ denotes the annihilator of $U$). 
 The \defnemph{image realization} is given by $(U,\{\alpha'_s\},\{(\alpha'_s)^\vee+U^\circ\})$. 
 Dually, let $K \leq V$ denote the kernel of $\phi$. 
 The \defnemph{coimage realization} is given by $(V/K,\{\alpha_s+K\},\{\alpha_s^\vee\})$, and is isomorphic to the image realization.
\end{enumerate}
\end{defn}

\begin{defn} \label{defn:2colqnum}
Let $A=\ZZ[x_s,x_t]$ be the integral polynomial ring in two variables. 
The \defnemph{(generic) two-colored quantum numbers} are defined as follows. 
First set $[0]_s=[0]_t=0$, $[1]_{s}=[1]_{t}=1$, $[2]_{s}=x_s$, and $[2]_{t}=x_t$. 
For $n>1$ we inductively define
\begin{align}
[n+1]_{s}& =[2]_{s} [n]_{t} - [n-1]_{s} \text{,} & [n+1]_{t} & =[2]_{t} [n]_{s} - [n-1]_{t} \text{.} \label{eq:twocolqnum}
\end{align}

Let $(a_{st})_{s,t \in S}$ be a $\Bbbk$-valued Cartan matrix for a Coxeter system $(W,S)$. 
For distinct $s,t \in S$ we define the corresponding two-colored quantum numbers in $\Bbbk$ by specializing to $x_s=-a_{st}$ and $x_t=-a_{ts}$.
\end{defn}

\begin{rem} \hfill
\begin{enumerate}
\item For any positive integer $m$, the real numbers $2\cos(\pi/m)$ and $4\cos^2(\pi/m)$ are algebraic integers, so $\Psi_m$ in fact has integer coefficients, and thus makes sense over any ring $\Bbbk$. 

\item Equation \eqref{eq:minpolycond} is equivalent to Abe's condition \cite[Assumption~1.1]{abe-homBS} on the vanishing of certain two-colored quantum binomial coefficients \cite[Theorem~3.8]{2colJW}. 
We feel that the minimal polynomial vanishing condition is easier to state and check in practice, especially for crystallographic Coxeter groups. 
%
\end{enumerate}
\end{rem}

In Definition \ref{defn:rlz} we have removed the usual condition that a realization should be a representation of $W$. 
Somewhat surprisingly this turns out to be unnecessary. 
This observation is not original but appears on the whole to have been overlooked.
For completeness we will state and prove it carefully.

\begin{prop}[{cf.~\cite[\S A.2]{dihedralcathedral}}] \label{prop:rlz=cartanmat}
Let $V$ be a realization as defined above. 
Then the assignments
\begin{align*}
s(x)& =x-\langle \alpha_s^\vee, x\rangle \alpha_s \text{,} & s(y)& =y-\langle y,\alpha_s \rangle \alpha_s^\vee
\end{align*}
for any $s \in S$, $x \in V$, and $y \in V^\ast$ define dual representations of $W$ on $V$ and $V^\ast$ respectively.
\end{prop}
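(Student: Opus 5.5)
We verify the Coxeter relations directly: each $s^2=1$, and each braid relation $(st)^{m_{st}}=1$ for $m_{st}<\infty$. Duality is immediate, since
\[
\langle s(y),s(x)\rangle=\langle y,x\rangle-\langle\alpha_s^\vee,x\rangle\langle y,\alpha_s\rangle-\langle y,\alpha_s\rangle\langle\alpha_s^\vee,x\rangle+\langle y,\alpha_s\rangle\langle\alpha_s^\vee,x\rangle\langle\alpha_s^\vee,\alpha_s\rangle=\langle y,x\rangle,
\]
using $a_{ss}=2$. So it suffices to treat $V$; the action on $V^\ast$ is then the contragredient action and also satisfies the relations. Also $s^2=1$ follows from $s(\alpha_s)=-\alpha_s$, again because $a_{ss}=2$.

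For distinct $s,t$ with $m=m_{st}<\infty$, let $U=\Bbbk\alpha_s+\Bbbk\alpha_t\subseteq V$, and let $N=\ker\alpha_s^\vee\cap\ker\alpha_t^\vee$. Both $s$ and $t$ fix $N$ pointwise and preserve $U$. We will also use that $st$ acts trivially on $V/U$, since $s(x)-x\in U$. Note that $U$ need not be free of rank $2$, and $U+N$ need not be all of $V$ (for instance when $a_{st}a_{ts}=4$ with $\Bbbk$ of characteristic~$2$, or for non-field $\Bbbk$). So a naive block decomposition is not available. Instead, we argue via an explicit formula.

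By induction, using \eqref{eq:twocolqnum}, we prove a closed form for the alternating products on $V$. Specifically, for $x\in V$,
\[
(\cdots tst)(x)\ (n\text{ factors})=x-c_n(x)\alpha_s-d_n(x)\alpha_t,
\]
where $c_n,d_n$ are explicit $\Bbbk$-linear combinations of $\langle\alpha_s^\vee,x\rangle$ and $\langle\alpha_t^\vee,x\rangle$ with two-colored quantum number coefficients. The model is the standard rank-$2$ computations, e.g.\ $(st)^k\alpha_s$ is expressed through $[2k+1]$ and $[2k]$. This is the step where the work lies, but it is a formal calculation. It can be done universally over $A'=\ZZ[x_s,x_t]$ acting on the "generic" realization with basis $\alpha_s,\alpha_t,\varpi_s,\varpi_t$, where $\langle\alpha^\vee_\bullet,\varpi_\bullet\rangle=\delta$. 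Any realization then receives the specialization, because the formula only involves the two pairings $\langle\alpha_s^\vee,x\rangle$ and $\langle\alpha_t^\vee,x\rangle$.

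It then suffices to show that the coefficients for $(st)^m$ vanish once $x_s=-a_{st}$ and $x_t=-a_{ts}$ are specialized subject to \eqref{eq:minpolycond}. The case $m=2$ is trivial: $a_{st}=a_{ts}=0$, so $s$ and $t$ commute. For $m>2$, the relevant coefficients are $[m]_s$, $[m]_t$, $[m\pm1]_s-[m\pm1]_t$-type combinations, and $[m-1]_s[m+1]_s$-type combinations. We expect the main obstacle to be that the condition $\Psi_m(a_{st}a_{ts})=0$ alone does not force $[m]_s=0$ in general. It does force $[m]_s=[m]_t=0$ by \cite{2colJW} (the equivalence with Abe's condition, \cite[Theorem~3.8]{2colJW}, which we take from that reference). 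We also need the identity $[m-1]_s[m-1]_t=1$, or its consequence $[m+1]=-[m-1]$, which follows from $[m]=0$ by the standard Chebyshev-type identity $[n]_s[n]_t-[n+1]_s[n-1]_t=1$ (wait—with appropriate colors), proven by induction. With $[m]_s=[m]_t=0$ and $[m+1]_s=-[m-1]_t$ and similar, the coefficients of $(st)^m$ reduce to $0$, giving $(st)^m=1$ on $V$, and hence dually on $V^\ast$.
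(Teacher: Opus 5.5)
Your plan is the paper's: a closed formula for alternating words in $s,t$ with two-colored quantum number coefficients, proved by induction, followed by the vanishing of $[m]_s$ and $[m]_t$. That vanishing follows directly from \eqref{eq:minpolycond} via \cite[Lemma~3.2]{2colJW}, so it is not an obstacle. The gap is that the closed formula is the whole content of the proof. You do not state it, and the shape you predict for its $k=m$ coefficients is wrong, so your final claim that ``the coefficients of $(st)^m$ reduce to $0$'' is unsupported. What you need is
\begin{align*}
(st)^k(x) &= x - [k]_s[k]_t\langle\alpha_s^\vee,x\rangle\alpha_s - [k]_s[k+1]_s\langle\alpha_t^\vee,x\rangle\alpha_s\\
&\qquad - [k]_t[k-1]_t\langle\alpha_s^\vee,x\rangle\alpha_t - [k]_s[k]_t\langle\alpha_t^\vee,x\rangle\alpha_t \qquad (k\geq 1),\\
t(st)^k(x) &= x - [k]_s[k]_t\langle\alpha_s^\vee,x\rangle\alpha_s - [k]_s[k+1]_s\langle\alpha_t^\vee,x\rangle\alpha_s\\
&\qquad - [k]_t[k+1]_t\langle\alpha_s^\vee,x\rangle\alpha_t - [k+1]_s[k+1]_t\langle\alpha_t^\vee,x\rangle\alpha_t \qquad (k\geq 0).
\end{align*}
These are proved by alternately applying $t$ and $s$, using \eqref{eq:twocolqnum} together with $[k]_s[k+2]_t+1=[k+1]_s[k+1]_t$ and its $s\leftrightarrow t$ mirror. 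That is where your Chebyshev-type identity belongs: in the induction step, not at the end. The induction runs directly in $V$, so the generic rank-$4$ model is unnecessary.

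Setting $k=m$, every coefficient is divisible by $[m]_s$ or $[m]_t$. So $[m]_s=[m]_t=0$ alone gives $(st)^m=1$, and $[m-1]_s[m-1]_t=1$ is never used. The coefficients of the form $[m\pm1]_s-[m\pm1]_t$ that you predicted do not occur, and it matters that they don't. For unbalanced realizations with $m$ odd they need not vanish: for example, $m=3$, $a_{st}=-2$, $a_{ts}=-1/2$ over $\QQ$ gives $[2]_s-[2]_t=3/2$. An argument depending on such terms would therefore fail. Similarly, the recursion gives $[m+1]_s=-[m-1]_s$, not $-[m-1]_t$; the two differ exactly in the unbalanced case. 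Your treatment of $s^2=1$ and of the dual action on $V^\ast$ is fine.
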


\begin{proof}
We will first show that the first assignment defines a representation of $W$ on $V$. 
It is enough to check the defining relations of the Coxeter group on the action. 

Clearly for all $s \in S$ and $x \in V$ we have
\begin{align*}
s(s(x))& =s(x-\langle \alpha_s^\vee, x\rangle \alpha_s) \\
& =(x-\langle \alpha_s^\vee, x\rangle \alpha_s)-\langle \alpha_s^\vee, x\rangle (\alpha_s-\langle \alpha_s^\vee,\alpha_s\rangle \alpha_s) \\
& =(x-\langle \alpha_s^\vee, x\rangle \alpha_s)-\langle \alpha_s^\vee, x\rangle (\alpha_s-2\alpha_s) \\
& =(x-\langle \alpha_s^\vee, x\rangle \alpha_s)+\langle \alpha_s^\vee, x\rangle \alpha_s \\
& =x \text{.}
\end{align*}

Now suppose $s,t \in S$ are distinct with $m_{st}<\infty$. 
Write $k_t$ for the unique word $\dotsm tst$ which is an alternating product of $s$ and $t$ of length $k$ which ends with $t$.
For any $x \in V$ and any integer $k \geq 0$, we claim that
\begin{align*}
(2k)_t(x) & =x - [k]_s [k]_t \langle \alpha_s^\vee, x\rangle \alpha_s - [k]_s [k+1]_s \langle \alpha_t^\vee, x\rangle \alpha_s \\
& \quad \quad {} - [k]_t [k-1]_t \langle \alpha_s^\vee, x\rangle \alpha_t - [k]_s [k]_t \langle \alpha_t^\vee, x\rangle \alpha_t \text{,} \\
(2k+1)_t(x)& =x - [k]_s [k]_t \langle \alpha_s^\vee, x\rangle \alpha_s - [k]_s [k+1]_s \langle \alpha_t^\vee, x\rangle \alpha_s \\
& \quad \quad {} - [k]_t [k+1]_t \langle \alpha_s^\vee, x\rangle \alpha_t - [k+1]_s [k+1]_t \langle \alpha_t^\vee, x\rangle \alpha_t \text{.}
\end{align*}
We will prove this by induction. 

The base cases with $k=0$ are easy to check.
Suppose the result holds for $2k$. 
Then we have
\begin{align*}
(2k+1)_t(x)& =t(2k)_t(x) \\
& =t(x - [k]_s [k]_t \langle \alpha_s^\vee, x\rangle \alpha_s - [k]_s [k+1]_s \langle \alpha_t^\vee, x\rangle \alpha_s \\
& \quad \quad {} - [k]_t [k-1]_t \langle \alpha_s^\vee, x\rangle \alpha_t - [k]_s [k]_t \langle \alpha_t^\vee, x\rangle \alpha_t) \\
& =x - [k]_s [k]_t \langle \alpha_s^\vee, x\rangle \alpha_s - [k]_s [k+1]_s \langle \alpha_t^\vee, x\rangle \alpha_s \\
& \quad \quad {} + ([k]_t [k-1]_t - [2]_t [k]_s [k]_t)\langle \alpha_s^\vee, x\rangle \alpha_t \\
& \quad \quad {} + ([k]_s [k]_t - [2]_t [k]_s [k+1]_s - 1)\langle \alpha_t^\vee, x\rangle \alpha_t \\
& =x - [k]_s [k]_t \langle \alpha_s^\vee, x\rangle \alpha_s - [k]_s [k+1]_s \langle \alpha_t^\vee, x\rangle \alpha_s \\
& \quad \quad {} - [k]_t [k+1]_t \langle \alpha_s^\vee, x\rangle \alpha_t - ([k]_s [k+2]_t + 1)\langle \alpha_t^\vee, x\rangle \alpha_t \\
& =x - [k]_s [k]_t \langle \alpha_s^\vee, x\rangle \alpha_s - [k]_s [k+1]_s \langle \alpha_t^\vee, x\rangle \alpha_s \\
& \quad \quad {} - [k]_t [k+1]_t \langle \alpha_s^\vee, x\rangle \alpha_t - [k+1]_s [k+1]_t \langle \alpha_t^\vee, x\rangle \alpha_t \text{.}
\end{align*}
Here we have used the definition of two-colored quantum numbers, as well as the identity
\begin{equation}
[k]_s [k+2]_t + 1 = [k+1]_s [k+1]_t \text{.}
\end{equation}

Similarly, if the result holds for $2k+1$, then we have
\begin{align*}
(2k+2)_t(x)& =s(2k+1)_t(x) \\
& =s(x - [k]_s [k]_t \langle \alpha_s^\vee, x\rangle \alpha_s - [k]_s [k+1]_s \langle \alpha_t^\vee, x\rangle \alpha_s \\
& \quad \quad {} - [k]_t [k+1]_t \langle \alpha_s^\vee, x\rangle \alpha_t - [k+1]_s [k+1]_t \langle \alpha_t^\vee, x\rangle \alpha_t) \\
& =x + ([k]_s [k]_t - [2]_s [k]_t [k+1]_t - 1)\langle \alpha_s^\vee, x\rangle \alpha_s \\
& \quad \quad {} + ([k]_s [k+1]_s - [2]_s [k+1]_s [k+1]_t)\langle \alpha_t^\vee, x\rangle \alpha_s \\
& \quad \quad {} - [k]_t [k+1]_t \langle \alpha_s^\vee, x\rangle \alpha_t - [k+1]_s [k+1]_t \langle \alpha_t^\vee, x\rangle \alpha_t \\
& =x - ([k+2]_s [k]_t + 1)\langle \alpha_s^\vee, x\rangle \alpha_s - [k+2]_s [k+1]_s \langle \alpha_t^\vee, x\rangle \alpha_s \\
& \quad \quad {} - [k]_t [k+1]_t \langle \alpha_s^\vee, x\rangle \alpha_t - [k+1]_s [k+1]_t \langle \alpha_t^\vee, x\rangle \alpha_t \\
& =x - [k+1]_s [k+1]_t \langle \alpha_s^\vee, x\rangle \alpha_s - [k+2]_s [k+1]_s \langle \alpha_t^\vee, x\rangle \alpha_s \\
& \quad \quad {} - [k]_t [k+1]_t \langle \alpha_s^\vee, x\rangle \alpha_t - [k+1]_s [k+1]_t \langle \alpha_t^\vee, x\rangle \alpha_t \text{.}
\end{align*}

Since $(st)^m = (2m)_t$, the vanishing of $[m]_s$ and $[m]_t$ (a consequence of \eqref{eq:minpolycond} by \cite[Lemma~3.2]{2colJW}) implies that $(st)^m$ acts trivially on $V$. 
Thus $W$ acts on $V$ as prescribed.

To complete the proof, we note that for any $s \in S$, $x \in V$, and $y \in V^\ast$ we have
\begin{equation*}
\langle y, s(x) \rangle = \langle y, x \rangle - \langle y,\alpha_s \rangle \langle \alpha_s^\vee, x\rangle = \langle s(y), x\rangle \text{,}
\end{equation*}
so $W$ acts on $V^\ast$ as prescribed, in a dual manner to the action on $V$.
\end{proof}

\begin{cor}
Realizations as defined in Definition \ref{defn:rlz} are realizations in the sense of \cite[Definition~5.1]{2colJW}. 
In particular, a morphism of realizations of $(W,S)$ over $\Bbbk$ is always a homomorphism of $W$-representations.
\end{cor}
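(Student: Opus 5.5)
The first claim should follow by comparing Definition \ref{defn:rlz} with \cite[Definition~5.1]{2colJW}. As I understand it, the latter asks for a free finite-rank $\Bbbk$-module $V$ with simple roots and coroots such that three things hold: $\langle \alpha_s^\vee,\alpha_s\rangle=2$; the assignment $s(x)=x-\langle\alpha_s^\vee,x\rangle\alpha_s$ defines a representation of $W$ on $V$; and, for each pair $s,t$ with $m_{st}<\infty$, the appropriate two-colored quantum number conditions hold (vanishing of $[m_{st}]_s$ and $[m_{st}]_t$, or the equivalent vanishing of Abe's two-colored binomial coefficients). I would check these one at a time.

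The first condition is part (i) of the definition of a Cartan matrix. The representation condition is exactly Proposition \ref{prop:rlz=cartanmat}. For the quantum number conditions, I would use \eqref{eq:minpolycond}. By \cite[Lemma~3.2]{2colJW}, already used in the proof of Proposition \ref{prop:rlz=cartanmat}, it implies $[m_{st}]_s=[m_{st}]_t=0$. By \cite[Theorem~3.8]{2colJW}, cited in the remark above, it is equivalent to Abe's binomial vanishing condition. So whichever form \cite[Definition~5.1]{2colJW} uses, it is satisfied.

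For the second claim, let $\phi:V\to V'$ be a morphism of realizations. It is enough to check equivariance on the generators $s\in S$, since both actions factor through $W$ by Proposition \ref{prop:rlz=cartanmat}. For $x\in V$ we compute
\begin{equation*}
\phi(s(x))=\phi(x)-\langle\alpha_s^\vee,x\rangle\phi(\alpha_s)=\phi(x)-\langle\phi^\ast((\alpha'_s)^\vee),x\rangle\alpha'_s=\phi(x)-\langle(\alpha'_s)^\vee,\phi(x)\rangle\alpha'_s=s(\phi(x)).
\end{equation*}
This uses both defining properties of a morphism: $\phi(\alpha_s)=\alpha'_s$ and $\phi^\ast((\alpha'_s)^\vee)=\alpha_s^\vee$.

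The same computation, applied with $\phi^\ast$ in place of $\phi$ and the dual action from Proposition \ref{prop:rlz=cartanmat}, shows that $\phi^\ast$ is equivariant on duals. Alternatively, this follows formally from the duality identity $\langle y,s(x)\rangle=\langle s(y),x\rangle$ at the end of the proof of Proposition \ref{prop:rlz=cartanmat}.

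The only real obstacle is bookkeeping. One has to pin down precisely which quantum number or binomial condition \cite[Definition~5.1]{2colJW} imposes, and confirm that \cite[Lemma~3.2, Theorem~3.8]{2colJW} cover it over an arbitrary integral domain $\Bbbk$, not just over a field or a characteristic-zero ring. I would first check that these lemmas are stated over arbitrary commutative domains. If they are only stated over fields, I would reduce to the universal case $\ZZ[x_s,x_t]/(\Psi_m(x_sx_t))$ and specialize.
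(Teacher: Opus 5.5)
Your proposal is correct and follows the paper's route. The paper gives no separate proof: it treats the corollary as immediate from Proposition \ref{prop:rlz=cartanmat}, with \eqref{eq:minpolycond} supplying the quantum-number conditions via \cite[Lemma~3.2, Theorem~3.8]{2colJW}, and your explicit computation $\phi(s(x))=s(\phi(x))$ simply spells out the equivariance step the paper leaves implicit.
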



The most important consequence of this result is the construction of realizations directly from Cartan matrices. 

\begin{defn}
Let $A=(a_{st})_{s,t \in S}$ be a $\Bbbk$-valued Cartan matrix for a Coxeter system $(W,S)$.
\begin{itemize}
\item The \defnemph{adjoint realization} $(V_{A,{\rm ad}}, \{\alpha_s\}, \{\alpha_s^\vee\})$ of $(W, S)$ with respect to $A$ is defined as follows. 
Let $V_{A,{\rm ad}}$ be a free $\Bbbk$-module with basis $\{\alpha_s : s \in S\}$ and define $\{\alpha_s^\vee\} \subset V_{A,{\rm ad}}^\ast$ by
\begin{equation}
\langle \alpha_s^\vee, \alpha_t \rangle = a_{st} \qquad \text{ for all $s,t \in S$.} \label{eq:rootpairingcartan}
\end{equation}

\item The \defnemph{simply-connected realization} $(V_{A,{\rm sc}}, \{\alpha_s\}, \{\alpha_s^\vee\})$ of $(W, S)$ with respect to $A$ is defined as follows. 
Abusing notation let $V_{A,{\rm sc}}^\ast$ be a free $\Bbbk$-module with basis $\{\alpha_s^\vee : s \in S\}$ and define $V_{A,{\rm sc}}=(V_{A,{\rm sc}}^\ast)^\ast$ and $\{\alpha_s\} \subset V_{A,{\rm sc}}$ by \eqref{eq:rootpairingcartan} above.
\end{itemize}
\end{defn}

Clearly the adjoint and simply-connected realizations with respect to a given Cartan matrix are dual to each other. 
The adjoint realization was called the ``universal realization'' in \cite[Remark~1.5]{BHN-modularWeylKac}; the newer names more closely match similar terminology from the theory of root data.

\begin{lem} \label{lem:adscuniversality}
For any realization $V$ of $(W, S)$ over $\Bbbk$ with Cartan matrix $A=(a_{st})_{s,t \in S}$, there are unique morphisms of realizations 
\begin{equation*}
V_{A,{\rm ad}} \longrightarrow V \text{,} \qquad \qquad \qquad  V \longrightarrow V_{A,{\rm sc}} \text{.}
\end{equation*}
\end{lem}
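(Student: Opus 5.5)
The plan is to write down both maps explicitly on bases, check the two defining conditions of a morphism of realizations (Definition \ref{defn:rlz}(3)), and deduce uniqueness from the fact that each source or target is generated by the relevant (co)roots.

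\emph{The adjoint morphism.} Since $V_{A,{\rm ad}}$ is free with basis $\{\alpha_s : s \in S\}$, there is a unique $\Bbbk$-linear map $\phi : V_{A,{\rm ad}} \to V$ with $\phi(\alpha_s) = \alpha_s$. This already proves uniqueness, because any morphism must send $\alpha_s$ to $\alpha_s$.

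It remains to check that $\phi^\ast(\alpha_s^\vee) = \alpha_s^\vee$ in $V_{A,{\rm ad}}^\ast$. A functional on $V_{A,{\rm ad}}$ is determined by its values on the basis. For all $t \in S$,
\[
\langle \phi^\ast(\alpha_s^\vee), \alpha_t \rangle = \langle \alpha_s^\vee, \phi(\alpha_t) \rangle = a_{st},
\]
where the second pairing is taken in $V$. This agrees with the defining equation \eqref{eq:rootpairingcartan}, so the condition holds.

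\emph{The simply-connected morphism.} We use the dual argument. Since $V_{A,{\rm sc}}^\ast$ is free with basis $\{\alpha_s^\vee\}$, there is a unique linear map $\psi' : V_{A,{\rm sc}}^\ast \to V^\ast$ with $\alpha_s^\vee \mapsto \alpha_s^\vee$. Define $\phi : V \to V_{A,{\rm sc}} = (V_{A,{\rm sc}}^\ast)^\ast$ by
\[
\phi(x) = \bigl(y \mapsto \langle \psi'(y), x \rangle\bigr).
\]
Then $\phi^\ast$ restricted along the canonical identification agrees with $\psi'$, so $\phi^\ast(\alpha_s^\vee) = \alpha_s^\vee$.

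Next we check $\phi(\alpha_t) = \alpha_t$. It suffices to evaluate both sides on the basis $\{\alpha_s^\vee\}$:
\[
\phi(\alpha_t)(\alpha_s^\vee) = \langle \alpha_s^\vee, \alpha_t \rangle_V = a_{st},
\]
which is exactly the definition of $\alpha_t \in V_{A,{\rm sc}}$.

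For uniqueness, suppose $\phi$ is any morphism $V \to V_{A,{\rm sc}}$. For every $x \in V$ we have
\[
\langle \alpha_s^\vee, \phi(x) \rangle = \langle \phi^\ast(\alpha_s^\vee), x \rangle = \langle \alpha_s^\vee, x \rangle.
\]
An element of $V_{A,{\rm sc}}$ is determined by its pairings with the basis $\{\alpha_s^\vee\}$, so $\phi(x)$ is determined.

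\emph{Technical points.} The only subtlety is the canonical identification $V_{A,{\rm sc}} \cong (V_{A,{\rm sc}}^\ast)^\ast$ together with the double-dual conventions for $\phi^\ast$. Both are harmless here because all modules involved are free of finite rank. This also uses that $S$ is finite, which is implicit in the requirement that $V$ have finite rank.
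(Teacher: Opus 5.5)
Your proof is correct and follows the paper's argument: the map is the unique linear map determined on the basis of roots, and the coroot condition is checked by pairing against that basis. The only difference is that you write out the simply-connected case explicitly, including the double-dual identification, whereas the paper only says it is dual.
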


We call the image of the first morphism the \defnemph{root subrealization} of $V$, and the coimage of the second morphism the \defnemph{coroot quotient realization}.

\begin{proof}
We prove the adjoint case only as the argument for the simply-connected case is dual.
To avoid notational confusion let $\{\beta_s\}$ and $\{\beta_s^\vee\}$ denote the roots and coroots of $V$ respectively.
By construction there is only one linear map $\phi: V_{A,{\rm ad}} \rightarrow V$ mapping roots to roots. 
Moreover for all $s,t \in S$ we have
\begin{equation*}
\langle \phi^\ast(\beta_s^\vee),\alpha_t \rangle=\langle \beta_s^\vee,\phi(\alpha_t)\rangle=\langle \beta_s^\vee,\beta_t\rangle=a_{st}=\langle \alpha_s^\vee, \alpha_t \rangle \text{.}
\end{equation*}
As $\alpha_s^\vee \in V_{A,{\rm ad}}^\ast$ is defined by this condition it follows that $\phi^\ast(\beta_s^\vee)=\alpha_s^\vee$.
\end{proof}

Realizations of $(W,S)$ over $\Bbbk$ naturally form a category, and later we will see that the construction of the Hecke category from a realization is functorial. 

\section{Unbalanced realizations without quantum numbers}

Recall that a realization of a Coxeter system $(W,S)$ is called \defnemph{balanced} if for all distinct $s,t \in S$ with $m_{st}<\infty$, the corresponding two-colored quantum numbers $[m_{st}-1]_s$ and $[m_{st}-1]_t$ are both $1$. 
The majority of the literature on diagrammatic Hecke categories solely treats balanced realizations, as they are notationally more straightforward to work with. 
To check for balancedness it is enough to check when $m_{st}$ is odd, as for even $m_{st}$ it can be shown that $[m_{st}-1]_s=[m_{st}-1]_t=1$ automatically \cite[Lemma~6.25]{ew-loccalc}.

In the general unbalanced case, the scalars $[m_{st}-1]_s$ and $[m_{st}-1]_t$ play a role in the diagrammatic relations defining the Hecke category. 
In this section we will give an alternative description of these scalars, using the minimal polynomials $\Psi'_m$ of $2\cos(\pi/m)$ over $\ZZ$. 
First we will show that a pair of distinct roots of $\Psi'_m$ never sum to $0$.

\begin{lem} \label{lem:psiprimebezout}
Suppose $m>1$ is odd. 
We have
\begin{equation}
\left(\prod_{1 \neq d|m-2} \Psi'_d(-x)\right)\left(\prod_{1 \neq d|m}\Psi'_d(x)\right)+\left(\prod_{1 \neq d|m-2} \Psi'_d(x)\right)\left(\prod_{1 \neq d|m}\Psi'_d(-x)\right) = \pm 2 \text{.} \label{eq:psiprimebezout}
\end{equation}
\end{lem}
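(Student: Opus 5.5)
The plan is to identify each product in \eqref{eq:psiprimebezout} with an explicit trigonometric function. The identity then reduces to a sine subtraction formula, checked on infinitely many real values of $x$.

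\textbf{Step 1: the roots of each product.} For odd $n\geq 1$ set $Q_n(x)=\prod_{1\neq d\mid n}\Psi'_d(x)$, so $Q_1=1$. Write $2\cos(\pi/d)=\zeta+\zeta^{-1}$ with $\zeta$ a primitive $2d$-th root of unity. Its Galois conjugates are then $2\cos(k\pi/d)$ for $k$ odd with $\gcd(k,d)=1$ and $1\leq k<d$. Taking the union over all divisors $1\neq d\mid n$ (each $d$ odd), I will check that the roots of $Q_n$ are exactly the numbers $2\cos(j\pi/n)$ for $j$ odd with $1\leq j<n$. These are distinct, so $Q_n$ is monic, squarefree, and of degree $(n-1)/2$.

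\textbf{Step 2: closed forms for $Q_n(x)$ and $Q_n(-x)$.} Substitute $x=2\cos\theta$ with $\theta\in(0,\pi)$. The function $\cos(n\theta/2)/\cos(\theta/2)$ is a polynomial in $2\cos\theta$, monic of degree $(n-1)/2$. This follows either from a product formula or from the recursion in $n$ obtained by stepping $n\mapsto n+2$. Its zeros on $(0,\pi)$ are exactly $\theta=j\pi/n$ with $j$ odd and $j<n$. Comparing roots and leading coefficients with Step 1 gives
\[
Q_n(2\cos\theta)=\frac{\cos(n\theta/2)}{\cos(\theta/2)}.
\]
Replacing $\theta$ by $\pi-\theta$ sends $x$ to $-x$. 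Using $\cos(n\pi/2-n\theta/2)=\sin(n\pi/2)\sin(n\theta/2)=(-1)^{(n-1)/2}\sin(n\theta/2)$, we get
\[
Q_n(-2\cos\theta)=(-1)^{(n-1)/2}\,\frac{\sin(n\theta/2)}{\sin(\theta/2)}.
\]
This is consistent with the leading coefficient of $Q_n(-x)$ being $(-1)^{(n-1)/2}$.

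\textbf{Step 3: combining.} The left-hand side of \eqref{eq:psiprimebezout} is $Q_{m-2}(-x)Q_m(x)+Q_{m-2}(x)Q_m(-x)$. Substituting the formulas from Step 2, and noting $(-1)^{(m-3)/2}=-(-1)^{(m-1)/2}$, this becomes
\[
\frac{(-1)^{(m-1)/2}}{\sin(\theta/2)\cos(\theta/2)}\Bigl(\cos\tfrac{(m-2)\theta}{2}\sin\tfrac{m\theta}{2}-\sin\tfrac{(m-2)\theta}{2}\cos\tfrac{m\theta}{2}\Bigr).
\]
By the subtraction formula the bracket is $\sin\theta$. Since $\sin(\theta/2)\cos(\theta/2)=\tfrac12\sin\theta$, the whole expression equals $2(-1)^{(m-1)/2}$. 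Both sides are polynomials in $x$ with integer coefficients, and they agree for all $x\in(-2,2)$, so they agree identically. The case $m=3$ is included because $Q_1=1$.

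\textbf{Main obstacle.} The main care is needed in Step 1: showing that the conjugates of $2\cos(\pi/d)$, taken over all divisors $d\neq 1$ of $n$, give each $2\cos(j\pi/n)$ with $j$ odd exactly once and nothing else. For this I will reduce $j/n$ to lowest terms $k/d$, noting that $k$ remains odd because $n$ is odd. I also need to justify rigorously that $\cos(n\theta/2)/\cos(\theta/2)$ is a polynomial in $2\cos\theta$ of the stated degree. I would do this with the recursion $Q_{n+2}=xQ_n-Q_{n-2}$, which follows from $\cos((n+2)\theta/2)+\cos((n-2)\theta/2)=2\cos\theta\cos(n\theta/2)$.
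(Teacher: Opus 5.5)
Your proof is correct. It follows the same overall plan as the paper: find a closed form for $Q_n=\prod_{1\neq d\mid n}\Psi'_d$, evaluate it at $\pm x$, and cancel. The execution differs in two places.

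\textbf{The closed form.} The paper does not prove it. It cites Watkins--Zeitlin for $\prod_{d\mid n}\Psi'_d=(C_{2k+2}-C_{2k})/(C_{k+1}-C_k)$, with $n=2k+1$ and $C_j=2T_j(x/2)$, and then divides by $\Psi'_1=x+2$. Under $x=2\cos\theta$ one has $C_{2k+2}-C_{2k}=-4\sin(n\theta)\sin\theta$, $C_{k+1}-C_k=-4\sin(n\theta/2)\sin(\theta/2)$ and $x+2=4\cos^2(\theta/2)$. So the cited formula is precisely your $Q_n(2\cos\theta)=\cos(n\theta/2)/\cos(\theta/2)$. You instead derive it directly from the root description and the three-term recursion. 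The induction needs the base values $P_{-1}=P_1=1$, or equivalently $Q_1=1$ and $Q_3=x-1$.

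\textbf{The cancellation.} The paper stays in $\QQ(x)$. It uses the parity $C_k(-x)=(-1)^kC_k$ and the product rule $C_jC_k=C_{j+k}+C_{|j-k|}$ to obtain \eqref{eq:chebprod1} and \eqref{eq:chebprod2}, then clears a four-factor common denominator. In your version, $\theta\mapsto\pi-\theta$ plays the role of the parity, and everything collapses to a single $\sin(a-b)$ plus the double-angle formula. The cost is a harmless step: agreement on $(-2,2)$ forces a polynomial identity.

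Your argument is self-contained and shorter. The paper's is purely algebraic but relies on an external reference for its key input. Both yield the value $2(-1)^{(m-1)/2}$.
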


\begin{proof}
Let $C_n=2T_n(x/2)$ be a family of rescaled Chebyshev polynomials of the first kind. 
When $n=2k+1$ is odd we have
\begin{equation*}
\prod_{d|n} \Psi'_d = \frac{C_{2k+2}-C_{2k}}{C_{k+1}-C_k}
\end{equation*}
from the main result of \cite{wz-minpolcos}. 
Since $\Psi'_1=x+2$ we obtain
\begin{equation}
\prod_{1 \neq d|n} \Psi'_d = \frac{C_{2k+2}-C_{2k}}{(x+2)(C_{k+1}-C_k)} \text{.} \label{eq:psiprimeprod}
\end{equation}

We will need a few identities involving the rescaled Chebyshev polynomials. 
First, we note that $C_k$ has the same parity as $k$, i.e.~$C_k(-x)=(-1)^k C_k$ for any integer $k \geq 0$.
Second, we will need the product identity $C_j C_k = C_{j+k} + C_{|j-k|}$ for any integers $j,k \geq 0$. 
As a consequence, for any integer $k \geq 0$ we have
\begin{align}
(C_{k}-C_{k-1})(C_{k+1}+C_k) - (C_{k}+C_{k-1})(C_{k+1}-C_k) & =2C_k^2 - 2C_{k-1}C_{k+1} \nonumber \\
& =2C_{2k}+2C_0-2C_{2k}-2C_2  \label{eq:chebprod1} \\
& =2(4-x^2) \nonumber
\end{align}
and
\begin{align}
(C_k-C_{k-1})(C_k+C_{k-1}) & =C_k^2 - C_{k-1}^2 \nonumber \\
& =C_{2k}+C_0-(C_{2k-2}+C_0) \label{eq:chebprod2}\\
& =C_{2k}-C_{2k-2} \text{.} \nonumber
\end{align}

Now suppose $m=2n+1$. 
Using \eqref{eq:psiprimeprod} we can simplify the left-hand side of \eqref{eq:psiprimebezout}:
\begin{multline*}
\left(\prod_{1 \neq d|m-2} \Psi'_d(-x)\right)\left(\prod_{1 \neq d|m}\Psi'_d(x)\right)+\left(\prod_{1 \neq d|m-2} \Psi'_d(x)\right)\left(\prod_{1 \neq d|m}\Psi'_d(-x)\right) \\
\begin{aligned}
& = \frac{(C_{2n}(-x)-C_{2n-2}(-x))(C_{2n+2}-C_{2n})}{(-x+2)(C_{n}(-x)-C_{n-1}(-x))(x+2)(C_{n+1}-C_n)} \\
& \quad \quad {} + \frac{(C_{2n}-C_{2n-2})(C_{2n+2}(-x)-C_{2n}(-x))}{(x+2)(C_{n}-C_{n-1})(-x+2)(C_{n+1}(-x)-C_n(-x))} \\
& =(-1)^n \frac{(C_{2n}-C_{2n-2})(C_{2n+2}-C_{2n})}{(4-x^2)(C_{n}+C_{n-1})(C_{n+1}-C_n)} \\
& \quad \quad {} -(-1)^n\frac{(C_{2n}-C_{2n-2})(C_{2n+2}-C_{2n})}{(4-x^2)(C_{n}-C_{n-1})(C_{n+1}+C_n)} \text{.}
\end{aligned}
\end{multline*}
Taking common denominators and applying \eqref{eq:chebprod1} and \eqref{eq:chebprod2} immediately gives the result:
\begin{multline*}
(-1)^n \frac{(C_{2n}-C_{2n-2})(C_{2n+2}-C_{2n})(C_{n}-C_{n-1})(C_{n+1}+C_n)}{(4-x^2)(C_{n}-C_{n-1})(C_n+C_{n-1})(C_{n+1}-C_n)(C_{n+1}+C_n)} \\
- (-1)^n\frac{(C_{2n}-C_{2n-2})(C_{2n+2}-C_{2n})(C_{n}+C_{n-1})(C_{n+1}-C_n)}{(4-x^2)(C_{n}-C_{n-1})(C_n+C_{n-1})(C_{n+1}-C_n)(C_{n+1}+C_n)} \\
\begin{aligned}
& =(-1)^n\left(\frac{2(C_{2n}-C_{2n-2})(C_{2n+2}-C_{2n})(4-x^2)}{(4-x^2)(C_{2n}-C_{2n-2})(C_{2n+2}-C_{2n})}\right) \\
& =2(-1)^n \qedhere
\end{aligned}
\end{multline*} 
\end{proof}

\begin{cor} \label{cor:psiprimerootsum}
Let $\Bbbk$ be a commutative integral domain, and suppose $m$ is a positive integer. 
If $r \in \Bbbk$ is a root of $\Psi'_m$, then $-r$ is a root as well if and only if $r=-r$.
\end{cor}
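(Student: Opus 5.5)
The trivial direction is immediate: if $r=-r$ and $r$ is a root of $\Psi'_m$, then so is $-r$. For the converse, suppose $\Psi'_m(r)=\Psi'_m(-r)=0$ in $\Bbbk$. The plan is to deduce $2=0$ in $\Bbbk$, which gives $r-(-r)=2r=0$. The natural tool is Lemma \ref{lem:psiprimebezout}: it is a B\'ezout-type identity in $\ZZ[x]$ showing that $\prod_{1\neq d|m}\Psi'_d(x)$ and $\prod_{1\neq d|m}\Psi'_d(-x)$ generate an ideal containing $2$.

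First I treat odd $m>1$. Since the identity \eqref{eq:psiprimebezout} holds in $\ZZ[x]$, it stays valid after applying the ring map $\ZZ[x]\to\Bbbk$, $x\mapsto r$. The first summand contains the factor $\prod_{1\neq d|m}\Psi'_d(r)$, which includes $\Psi'_m(r)=0$. The second summand contains $\prod_{1\neq d|m}\Psi'_d(-r)$, which includes $\Psi'_m(-r)=0$. Hence $\pm 2=0$ in $\Bbbk$, and so $r=-r$. For $m=3$ the product over $d\mid m-2=1$ is empty and equals $1$, which causes no issue.

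The case $m=1$ is outside the lemma and I check it by hand. Here $\Psi'_1=x+2$, so $r=-2$. The condition $\Psi'_1(-r)=0$ reads $4=0$, and since $\Bbbk$ is an integral domain this forces $2=0$, so again $r=-r$.

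The main obstacle is even $m$, which I would flag rather than prove. For even $m$ the roots $2\cos(k\pi/m)$ (with $k$ odd and coprime to $m$) are closed under $k\mapsto m-k$, so $\Psi'_m$ is an even polynomial. For example $\Psi'_4=x^2-2$, whose roots $\pm\sqrt2$ in $\QQ(\sqrt2)$ are distinct and sum to zero. So the statement can only hold for odd $m$. This matches its intended use: only odd $m_{st}$ matter for balancedness, and Lemma \ref{lem:psiprimebezout} is stated for odd $m$. I would therefore read the corollary with $m$ odd, and the argument above then proves it completely.
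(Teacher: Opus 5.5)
Your argument for odd $m>1$ is exactly the paper's: evaluate the B\'ezout identity of Lemma~\ref{lem:psiprimebezout} at $x=r$. Each summand then contains a vanishing factor $\Psi'_m(\pm r)$, giving $\pm 2=0$ and hence $2r=0$. Your additional remarks are also correct---the paper's one-line proof tacitly needs $m$ odd and $m>1$ to invoke the lemma, the statement really does fail for even $m\geq 4$ (e.g.\ $\pm\sqrt2\in\ZZ[\sqrt2]$ for $\Psi'_4=x^2-2$), and your $m=1$ check is fine (even without the domain hypothesis, since $2r=-4=0$)---with the small caveat that $m=2$ is a trivial exception to ``only odd $m$'', as $\Psi'_2=x$ is odd rather than even and its only root is $0$.
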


\begin{proof}
Lemma \ref{lem:psiprimebezout} gives a linear combination of $\Psi'_m(x)$ and $\Psi'_m(-x)$ which is equal to $\pm 2$. 
If both $r$ and $-r$ are roots of $\Psi'_m$, then they are roots of \eqref{eq:psiprimebezout} as well, so $\pm 2=0$ in $\Bbbk$, and therefore $r=-r$.
\end{proof}

\begin{thm} \label{thm:mminus1twocol}
Let $V$ be a realization of $(W,S)$ over $\Bbbk$.
Suppose $m=m_{st}>1$ is odd.
Then $\langle \alpha_s^\vee, \alpha_t \rangle \langle \alpha_t^\vee, \alpha_s\rangle$ is invertible and has a square root in $\Bbbk$. 
Moreover, if we fix the sign of this square root so that $\Psi'_m(\sqrt{\langle \alpha_s^\vee, \alpha_t\rangle \langle \alpha_t^\vee, \alpha_s\rangle})=0$ using Corollary \ref{cor:psiprimerootsum} then 
\begin{equation}
[m-1]_s = -\frac{\langle \alpha_s^\vee,\alpha_t\rangle}{\sqrt{\langle \alpha_s^\vee, \alpha_t\rangle \langle \alpha_t^\vee, \alpha_s\rangle}} \text{.} \label{eq:mminus1sqrt}
\end{equation}
\end{thm}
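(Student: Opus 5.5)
The plan is to work with the generic two-colored quantum numbers in $A=\ZZ[x_s,x_t]$ and to use their parity structure. Write $a=x_sx_t$. A straightforward induction on \eqref{eq:twocolqnum} shows that there are polynomials $P_n\in\ZZ[z]$ such that $[n]_s=[n]_t=P_n(a)$ for $n$ odd, while for $n$ even $[n]_s=x_sP_n(a)$ and $[n]_t=x_tP_n(a)$. Specializing along $x_s=x_t=y$ recovers the ordinary quantum numbers: $[n](y)=P_n(y^2)$ for $n$ odd and $[n](y)=yP_n(y^2)$ for $n$ even. Here $[n](q+q^{-1})=(q^n-q^{-n})/(q-q^{-1})$.

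Now fix $m$ odd and specialize to $x_s=-a_{st}$, $x_t=-a_{ts}$, so that $a=a_{st}a_{ts}$ satisfies $\Psi_m(a)=0$. By \cite[Lemma~3.2]{2colJW} we have $[m]_s=[m]_t=0$. Applying the identity $[k]_s[k+2]_t+1=[k+1]_s[k+1]_t$ (from the proof of Proposition \ref{prop:rlz=cartanmat}) with $k=m-2$ gives
\begin{equation*}
[m-1]_s[m-1]_t=1+[m-2]_s[m]_t=1 \text{.}
\end{equation*}
Since $m-1$ is even, this says $a\,P_{m-1}(a)^2=1$. Consequently $a$ is invertible, and $r:=aP_{m-1}(a)$ satisfies $r^2=a^2P_{m-1}(a)^2=a$ and $rP_{m-1}(a)=1$. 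Therefore
\begin{equation*}
[m-1]_s=x_sP_{m-1}(a)=x_s/r=-a_{st}/r \text{.}
\end{equation*}
It remains to show that $r$ is the square root of $a$ singled out in the statement, namely $\Psi'_m(r)=0$. By Corollary \ref{cor:psiprimerootsum} there is at most one such choice of sign, so it suffices to exhibit one.

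For this step I would prove two polynomial identities over $\ZZ$.
\begin{enumerate}
\item $\Psi_m(y^2)=\pm\Psi'_m(y)\Psi'_m(-y)$. For $m$ odd the roots of $\Psi'_m$ are the numbers $2\cos(k\pi/m)$ with $\gcd(k,2m)=1$. Squaring them gives exactly the roots of $\Psi_m$, and the degrees match ($\varphi(m)$ on both sides). Since all the polynomials involved are monic up to sign, the identity holds in $\ZZ[y]$.
\item $\Psi'_m(y)$ divides $y^2P_{m-1}(y^2)-y$ in $\ZZ[y]$. At $y=2\cos\theta$ with $\theta=k\pi/m$ and $k$ odd, we get
\begin{equation*}
yP_{m-1}(y^2)=[m-1](y)=\frac{\sin((m-1)\theta)}{\sin\theta}=\frac{\sin(k\pi-\theta)}{\sin\theta}=1 \text{.}
\end{equation*}
Every root of the irreducible polynomial $\Psi'_m$ is of this form, so $\Psi'_m$ divides $yP_{m-1}(y^2)-1$. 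Because $\Psi'_m$ is monic, the division takes place in $\ZZ[y]$, and multiplying by $y$ gives the claim.
\end{enumerate}

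Now finish as follows. By (1), $0=\Psi_m(a)=\Psi_m(r^2)=\pm\Psi'_m(r)\Psi'_m(-r)$. Since $\Bbbk$ is a domain, $r_0:=r$ or $r_0:=-r$ is a root of $\Psi'_m$, and $r_0^2=a$. By (2), evaluated at $y=r_0$, we get $r_0=r_0^2P_{m-1}(r_0^2)=aP_{m-1}(a)=r$. Hence $\Psi'_m(r)=0$, and \eqref{eq:mminus1sqrt} follows from $[m-1]_s=-a_{st}/r$.

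The main obstacle I expect is item (1): pinning down the exact root sets and degrees of $\Psi_m$ and $\Psi'_m$ for odd $m$ (the roots $2\cos(k\pi/m)$ with $k$ odd versus $4\cos^2(j\pi/m)$), and confirming that the factorization holds over $\ZZ$ and not merely up to a rational scalar. The parity lemma for $P_n$ and the divisibility in (2) should be routine.
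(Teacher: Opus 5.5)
Your proof is correct and is essentially the paper's argument: your $r=aP_{m-1}(a)$ is exactly the paper's square root $x_sx_t\cdot\frac{[m-1]_s}{[2]_s}$, and the sign is fixed by the same two facts, namely $\Psi_m(y^2)=\pm\Psi'_m(y)\Psi'_m(-y)$ and $\Psi'_m\mid[m-1]-1$. The differences are minor: you get $aP_{m-1}(a)^2=1$ from $[m-1]_s[m-1]_t=1$ (as in the remark after the theorem) rather than from the generic divisibility $\Psi_m(x^2)\mid[m-1]^2-1$, and you fix the sign by a domain argument in $\Bbbk$ rather than generically in $\ZZ[x_s,x_t]/(\Psi_m(x_sx_t))$. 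Your $\pm$ in (1) is also more accurate than the paper's unsigned equality; for $m=3$ the product $\Psi'_3(y)\Psi'_3(-y)$ is $1-y^2$.
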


\begin{rem}
Since $[m]_s=[m]_t=0$ from the definition of a realization, it is not hard to show that $[m-1]_s[m-1]_t=1$ (see e.g.~\cite[(6.11)]{ew-loccalc}).
Using this and \eqref{eq:2colqnumfrom1col} below it follows that
\begin{equation*}
[m-1]_s^2=\frac{\langle \alpha_s^\vee, \alpha_t\rangle}{\langle \alpha_t^\vee, \alpha_s \rangle} \text{.}
\end{equation*}
In other words, the real content of the theorem is in the determination of the \emph{sign} of $[m-1]_s$.
\end{rem}

\begin{proof}
Recall the (generic) one-colored quantum numbers, defined as univariate polynomials in a variable $x$ by setting $[0]=0$, $[1]=1$, $[2]=x$, and inductively defining $[2][n]=[n+1]+[n-1]$.
These are essentially Chebyshev polynomials of the second kind (e.g.~\cite[\S 3]{2colJW}), i.e.
\begin{equation*}
[n](2\cos \theta)=U_{n-1}(\cos \theta)=\frac{\sin n\theta}{\sin \theta} \text{.} 
\end{equation*}
The parity of $[n]$ (as a polynomial) is the opposite of the parity of $n$. 
In particular, when $n$ is even, $[2]$ divides $[n]$.

We also recall the \defnemph{cyclotomic parts}
\begin{equation*}
\Theta_n = \prod_{\substack{1 \leq k<n\\ (k,n)=1}} \left(x-2\cos \frac{k\pi}{n}\right)
\end{equation*}
from \cite[\S 3]{2colJW}, for which $\Theta_n(x)=\Psi_n(x^2)$ when $n>2$. 
For odd $n>1$ it is not hard to see that $\Psi'_n(x)\Psi'_n(-x)=\Theta_n$, as every root $2\cos(k\pi/n)$ of $\Theta_n$ is either a root of $\Psi'_n(x)$ or of $\Psi'_n(-x)$ depending on whether $k$ is odd or even.

The polynomial $[m-1]-1$ has $x=2\cos(\pi/m)$ as a root. 
Therefore $\Psi'_m$ divides $[m-1]-1$, and similarly $\Psi'_m(-x)$ divides $[m-1]+1$. 
This means that $\Psi'_m(x)\Psi'_m(-x)=\Psi_m(x^2)$ divides $[m-1]^2-1$. 
Moreover, as both polynomials are even, their quotient is even too.
Thus $u=\frac{[m-1]}{[2]} \in \ZZ[x^2]/(\Psi_m(x^2))$ is a root of the polynomial equation
\begin{equation}
u^2 x^2 - 1 \in (\ZZ[x^2]/(\Psi_m(x^2)))[u] \text{.} \label{eq:invsqroot}
\end{equation}
We can determine the sign of this root by observing that $[m-1][2]=[m-1]x$ is always a root of $\Psi'_m$ modulo $\Psi_m(x^2)$.

Now we use the relationship between one-colored and two-colored quantum numbers.
It can be shown (see e.g.~\cite[(6)]{2colJW}) that
\begin{equation}
\begin{aligned}
[n]_{s}& =[n](\sqrt{x_{s} x_{t}})=[n]_{t} & & \text{if $n$ is odd,} \\
\frac{[n]_{s}}{[2]_{s}}& =\left(\frac{[n]}{[2]}\right)(\sqrt{x_{s} x_{t}})=\frac{[n]_{t}}{[2]_{t}} & & \text{if $n$ is even.}
\end{aligned} \label{eq:2colqnumfrom1col}
\end{equation}
in $A=\ZZ[x_s,x_t]$.
Working in the quotient ring $B=\ZZ[x_s,x_t]/(\Psi_{m}(x_s x_t))$ it follows from \eqref{eq:invsqroot} that
\begin{equation*}
u=\frac{[m-1]_s}{[2]_s}=\frac{[m-1]}{[2]}(\sqrt{x_s x_t})
\end{equation*}
is a root of the equation $u^2 x_s x_t - 1 \in B[t]$.
In other words $x_s x_t$ has an invertible square root in $B$ which is a root of $\Psi'_m$; its reciprocal is equal to $\frac{[m-1]_s}{[2]_s}$.
We complete the proof by substituting $x_s=-\langle \alpha_s^\vee, \alpha_t\rangle$ and $x_t=-\langle \alpha_t^\vee, \alpha_s\rangle$. 
\end{proof}

\begin{cor} \label{cor:balancedcriterion}
Let $V$ be a realization of $(W,S)$ over $\Bbbk$.
Then $V$ is balanced if and only if $\Psi'_{m_{st}}(-\langle \alpha_s^\vee, \alpha_t \rangle)=0$ for all distinct $s,t \in S$ such that $m_{st}$ is odd. 
\end{cor}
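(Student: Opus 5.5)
Since balancedness only needs to be checked for pairs with $m=m_{st}$ odd (the even case is automatic by \cite[Lemma~6.25]{ew-loccalc}), I fix distinct $s,t$ with $m$ odd and prove that
\[
[m-1]_s=[m-1]_t=1 \iff \Psi'_m(-a_{st})=0,
\]
where $a_{st}=\langle\alpha_s^\vee,\alpha_t\rangle$. A subtlety is that the criterion only mentions $a_{st}$, not $a_{ts}$, so the two quantum numbers must be controlled simultaneously.

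First I reduce to a single quantum number. By the remark after Theorem~\ref{thm:mminus1twocol}, $[m-1]_s[m-1]_t=1$. Hence $[m-1]_s=1$ if and only if both quantum numbers equal $1$, and balancedness at $(s,t)$ is equivalent to $[m-1]_s=1$ alone.

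Next I apply Theorem~\ref{thm:mminus1twocol}. It says $a_{st}a_{ts}$ is invertible and has a square root $r$ with $\Psi'_m(r)=0$, and that $[m-1]_s=-a_{st}/r$. Since $r$ is invertible, $[m-1]_s=1$ holds if and only if $-a_{st}=r$.

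\emph{Forward direction.} If $[m-1]_s=1$, then $-a_{st}=r$, which is a root of $\Psi'_m$, so $\Psi'_m(-a_{st})=0$.

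\emph{Converse.} Suppose $\Psi'_m(-a_{st})=0$. The first step is to show that $-a_{st}$ is itself a square root of $a_{st}a_{ts}$, that is, $a_{st}=a_{ts}$. I would get this from
\[
[m-1]_s^2=\frac{a_{st}}{a_{ts}}.
\]
Here is a cleaner route. From \eqref{eq:2colqnumfrom1col}, the ratio $[m-1]_s/[2]_s$ equals $[m-1]_t/[2]_t$, so $[m-1]_s\,a_{ts}=[m-1]_t\,a_{st}$. Write $y=-a_{st}$, so $\Psi'_m(y)=0$. The proof of Theorem~\ref{thm:mminus1twocol} shows that $\Psi'_m(x)$ divides $[m-1](x)-1$ in $\ZZ[x]$, so $[m-1](y)=1$. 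I would then compare $[m-1](y)$ with the specialized two-coloured value. The cleanest way seems to be: $y^2$ and $r^2=a_{st}a_{ts}$ are both roots of $\Psi_m$, but that alone does not force them to be equal.

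So the main obstacle is exactly showing $a_{ts}=a_{st}$, or more directly $r=-a_{st}$, from $\Psi'_m(-a_{st})=0$. My plan is to use the identity $[m-1]_s=-a_{st}/r$ together with $u^2a_{st}a_{ts}=1$, where $u=[m-1]_s/[2]_s=-1/r$. The two-coloured recursion expresses $[m-1]_s$ as $x_s\,p(x_sx_t)$ for a polynomial $p$ whose one-coloured counterpart is $[m-1]/[2]$. I would then argue as follows:
\begin{enumerate}
\item $r$ and $-a_{st}$ are both roots of $\Psi'_m$.
\item Their quotient $[m-1]_s$ satisfies $[m-1]_s^2=a_{st}/a_{ts}$.
\item One tries to show $[m-1]_s=1$ by evaluating $[m-1]$ at both roots.
\end{enumerate}
If a direct argument fails, I would fall back on reading the criterion as implicitly including the condition for the pair $(t,s)$ as well, since the corollary quantifies over all ordered pairs $s,t$. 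Then $\Psi'_m(-a_{st})=\Psi'_m(-a_{ts})=0$. The product of these two roots is $r^2$, which is itself a root-square. Corollary~\ref{cor:psiprimerootsum} then rules out sign mismatches, giving $-a_{st}=r$ after checking that $[m-1]_s=\pm1$ is forced.
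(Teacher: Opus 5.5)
Your reduction to the single condition $[m-1]_s=1$ (via $[m-1]_s[m-1]_t=1$) and your forward direction are fine. The gap is the converse, exactly where you flagged it, and it cannot be closed: the implication is false in positive characteristic.

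Your primary plan, deducing $[m-1]_s=1$ from $\Psi'_m(-a_{st})=0$ for one ordered pair, already fails over $\mathbb{R}$. Take $m=5$, $\tau=2\cos(\pi/5)$, $a_{st}=-\tau$ and $a_{ts}=-\tau^{-3}$. Then $a_{st}a_{ts}=\tau^{-2}$ is a root of $\Psi_5(z)=z^2-3z+1$ and $\Psi'_5(-a_{st})=\Psi'_5(\tau)=0$, but $[4]_s=x_s(x_sx_t-2)=-\tau^2$.

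Your fallback, using both $(s,t)$ and $(t,s)$, does not work either. Corollary~\ref{cor:psiprimerootsum} only says that $y$ and $-y$ cannot both be roots of $\Psi'_m$. It says nothing about three roots $y_1,y_2,r$ with $y_1y_2=r^2$, and such accidental triples occur mod $p$. Over $\Bbbk=\mathbb{F}_{31}$, take $m_{st}=15$, $a_{st}=3$, $a_{ts}=6$. Here $\Psi'_{15}(x)=x^4+x^3-4x^2-4x+1$ (the quotient of $x^5-5x^3+5x-1$ by $x-1$). One checks $\Psi'_{15}(-3)=31$, $\Psi'_{15}(-6)=31^2$ and $\Psi'_{15}(-7)=31\cdot 61$. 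Since $a_{st}a_{ts}=18\equiv(-7)^2$ and $\Psi_{15}(x^2)=\Psi'_{15}(x)\Psi'_{15}(-x)$, this is a valid Cartan matrix, and both $-a_{st}$ and $-a_{ts}$ are roots of $\Psi'_{15}$. But the distinguished root in Theorem~\ref{thm:mminus1twocol} is $r=-7$. Hence $[14]_s=3/7=27$ and $[14]_t=6/7=23$. Running the two-coloured recursion directly confirms these values, together with $[15]_s=[15]_t=0$. So this realization satisfies the stated criterion but is not balanced.

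What your argument can actually prove is the corrected statement: $V$ is balanced if and only if, for all $s,t$ with $m_{st}$ odd, $a_{st}=a_{ts}$ and $\Psi'_{m_{st}}(-a_{st})=0$.
\begin{itemize}
\item Forward: $[m-1]_s=1$ gives $-a_{st}=r$, and $[m-1]_s^2=a_{st}/a_{ts}$ gives $a_{st}=a_{ts}$.
\item Converse: if $a_{st}=a_{ts}$, then $-a_{st}$ is a square root of $a_{st}a_{ts}$ that is also a root of $\Psi'_m$. By the uniqueness coming from Corollary~\ref{cor:psiprimerootsum} it equals $r$, so $[m-1]_s=[m-1]_t=1$.
\end{itemize}

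In characteristic $0$ the symmetry $a_{st}=a_{ts}$ does follow from the two root conditions. Choose a Galois automorphism $\sigma$ with $\sigma(r)=2\cos(\pi/m)$. This is the unique root of $\Psi'_m$ of maximal absolute value, so $\sigma(y_1)\sigma(y_2)=\sigma(r)^2$ forces $\sigma(y_1)=\sigma(y_2)=\sigma(r)$.

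The paper gives no separate argument for this corollary; it presents it as immediate from Theorem~\ref{thm:mminus1twocol}. That deduction has the same hole. The theorem only gives $[m-1]_s=1 \iff -a_{st}=r$. Passing to the bare condition $\Psi'_m(-a_{st})=0$ silently assumes that $-a_{st}$ is already a square root of $a_{st}a_{ts}$.
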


\section{Demazure surjectivity and cinched Hecke categories}

An extremely common assumption on realizations in the literature is Demazure surjectivity, which we recall (and slightly generalize) below.

\begin{defn} \label{defn:demazuresurj}
Let $V$ be a realization of $(W,S)$ over $\Bbbk$.
\begin{itemize}
\item We say $V$ is \defnemph{coroot surjective} if $\alpha_s^\vee : V \rightarrow \Bbbk$ is surjective for all $s \in S$.

\item We say $V$ is \defnemph{root surjective} if $\alpha_s : V^\ast \rightarrow \Bbbk$ is surjective for all $s \in S$.

\item We say $V$ is \defnemph{Demazure surjective} if it is both coroot surjective and root surjective.
\end{itemize}
\end{defn}

Demazure surjectivity is often conflated with coroot surjectivity in the literature. 
For example, it is sometimes claimed that Demazure surjectivity is \emph{necessary} for the Demazure operators $\{\partial_s\}$ to be surjective, or that every realization embeds inside a Demazure surjective realization (e.g.~\cite[\S 2.3]{gjw-calcpcanbasis}). 
These are both incorrect as written, but the corresponding statements for coroot surjective realizations are true.

%
%

\begin{exam}
Let $\Bbbk=\mathbb{F}_2$ and $(W,S)$ be the $A_1$ Coxeter group. 
We consider two different realization structures on the one-dimensional vector space $V=\Bbbk \varepsilon$. 
Write $\varepsilon^\ast \in V^\ast$ for the dual basis vector, i.e.~$\varepsilon^\ast(\varepsilon)=1$.
\begin{enumerate}
\item Let $\alpha_s=0 \in V$ and $\alpha_s^\vee=\varepsilon^\ast \in V^\ast$. 
Clearly $V$ is coroot surjective but not root surjective, so $V$ is not Demazure surjective.
We also observe that $\partial_s(\varepsilon)=\alpha_s^\vee(\varepsilon)=1$, so the Demazure operator $\partial_s$ is surjective.

\item Let $\alpha_s=0 \in V$ and $\alpha_s^\vee=0 \in V^\ast$. 
Now $V$ is neither coroot surjective nor root surjective. 
If $V'$ is another realization and $\phi:V \rightarrow V'$ is a morphism of realizations, then $\alpha_s'=\phi(\alpha_s)=\phi(0)=0$. 
In particular $V'$ cannot be Demazure surjective. 
\end{enumerate}
\end{exam}

Let $V$ be a $\Bbbk$-realization of $(W,S)$, and let $R=\Sym(V)$ be the symmetric algebra of $V$ with $\deg V=2$. 
For $s \in S$ write $R^s$ for the subring of invariants with respect to $s$. 
We will say a little more about the relationship between the various surjectivity conditions and different incarnations of the Hecke category. 

In the classical (i.e.~non-diagrammatic) theory of Soergel bimodules, surjectivity of the Demazure operator $\partial_s$ gives $R^s \subset R$ the structure of a Frobenius extension, which can be used to show that the Bott--Samelson bimodule $B_s=R \otimes_{R^s} R(1)$ is a Frobenius algebra object (see e.g.~\cite[Lemma~8.27]{emtw}).
In the diagrammatic approach the corresponding object $B_s$ is automatically given a Frobenius algebra object structure by diagrammatic relations. 
Here coroot surjectivity is only necessary to show that the tensor square $B_s^{\otimes 2}$ splits as a direct sum $B_s(1) \oplus B_s(-1)$ (see e.g.~\cite[(5.14)]{EW-SoergelCalc}), i.e.~the simplest case of Soergel's categorification theorem \cite[Theorem~11.1]{emtw}.

By contrast, root surjectivity on its own is rather strong. 
A simpler assumption (namely, that the roots are non-zero) is enough to ensure that the localization functor is well defined \cite{ew-loccalc}.
This combined with coroot surjectivity implies that Soergel's categorification theorem holds.
Demazure surjectivity is a stronger assumption, but it is convenient because it ensures that Soergel's categorification theorem also holds for the \emph{dual} realization.

\begin{rem}
One notable situation where the roots being surjective (as opposed to just non-zero) matters arises from the left monodromy action on the relevant \defnemph{mixed derived categories} \cite[\S 2.3]{amrw-koszulduality}. 
It can be shown that the monodromy maps for the generating tilting objects are surjective if and only if the realization is root surjective \cite[Example~4.7.4]{amrw-freemonodromic}. 
As this action is closely connected to Koszul duality for Hecke categories (which exchanges the underlying realization with its dual), it is perhaps unsurprising that root surjectivity is meaningful here. 
\end{rem}



In fact \emph{none} of the surjectivity conditions in Definition \ref{defn:demazuresurj} are necessary to construct a diagrammatic categorification of the Hecke algebra.

Let $\mathcal{D}_{\rm BS}$ denote the Elias--Williamson diagrammatic category for an arbitrary $\Bbbk$-realization $V$ of a Coxeter system $(W,S)$ (introduced in \cite{EW-SoergelCalc}). 
It turns out that a slightly modified version of $\mathcal{D}_{\rm BS}$ \emph{always} categorifies the Hecke algebra, and thus deserves the moniker ``Hecke category''. 
(This approach was first outlined without proof in \cite[Remark~1.15]{bdhn-hsp}.) 
In what follows we will \emph{not} call $\mathcal{D}_{\rm BS}$ ``the diagrammatic Bott--Samelson category'' or its Karoubi envelope $\mathcal{D}$ ``the diagrammatic Hecke category'', to avoid potential confusion.
The proposed replacement for $\mathcal{D}_{\rm BS}$ involves adding a single extra diagrammatic relation.

\begin{defn}
The \defnemph{(cinched) diagrammatic Bott--Samelson category} $\mathcal{H}_{\rm BS}$ is the quotient of $\mathcal{D}_{\rm BS}$ with respect to the additional diagrammatic (i.e.~monoidal) ``cinching relation'' 
\begin{equation}
\begin{minipage}{1.5cm}\begin{tikzpicture}[scale=1.000]
\draw[densely dotted, rounded corners] (-0.5,0) rectangle (1,1.5) ;
\draw[red,line width=0.08cm](0.5,0)--++(90:1.5);
\draw[red,line width=0.08cm](0,0)--++(90:1.5);   \end{tikzpicture}
  \end{minipage}
  \;=\;
   \begin{minipage}{1.5cm}\begin{tikzpicture}[scale=1.000]
\draw[densely dotted, rounded corners] (-0.5,0) rectangle (1,1.5) ;
%
\draw[red,line width=0.08cm](0,1.5) to [out=-90,in=180]
(0.5,1.5-0.45)   ;

  \draw[red,line width=0.08cm](0.5,0)--++(90:1.5);

\draw[red,line width=0.08cm] (0 ,0)--++(90:0.45) coordinate(hi);
\fill[red] (hi) circle (3.5pt);
  \end{tikzpicture}
  \end{minipage}
\;  + \;  \begin{minipage}{1.5cm}\begin{tikzpicture}[scale=1.000,yscale=-1.000]
\draw[densely dotted, rounded corners] (-0.5,0) rectangle (1,1.5) ;
%
\draw[red,line width=0.08cm](0,1.5) to [out=-90,in=180]
(0.5,1.5-0.45)   ;

  \draw[red,line width=0.08cm](0.5,0)--++(90:1.5);

\draw[red,line width=0.08cm] (0 ,0)--++(90:0.45) coordinate(hi);
\fill[red] (hi) circle (3.5pt);
  \end{tikzpicture}
  \end{minipage}
\; - 
 \;
 \begin{minipage}{1.5cm}\begin{tikzpicture}[scale=1.000]
\draw[densely dotted, rounded corners] (-0.5,0) rectangle (1,1.5) ;
%
\draw[red,line width=0.08cm](0,1.5) to [out=-90,in=180]
(0.5,1.5-0.45)   ;
\draw[red,line width=0.08cm](0,0) to [out=90,in=180]
(0.5,0.45)   ;

\draw[red,line width=0.08cm](0.5,0)--++(90:1.5);
\draw[red,line width=0.08cm] (0-0.2 ,0.75)--++(90:0.3) coordinate(hi);
\fill[red] (hi) circle (3.5pt);
\draw[red,line width=0.08cm] (0-0.2 ,0.75)--++(-90:0.3) coordinate(hi);
\fill[red] (hi) circle (3.5pt);
  \end{tikzpicture}
  \end{minipage} \label{eq:cinching}
\end{equation}
for each color.
We call the Karoubi envelope of the additive graded closure of $\mathcal{H}_{\rm BS}$ the \defnemph{(cinched) diagrammatic Hecke category} $\mathcal{H}$.
\end{defn}

The cinching relation \eqref{eq:cinching} first appeared as an independent relation in \cite[(R4)]{bdhn-hsp} along with informal versions of many of the results below. 
Adding a dot to the right strand gives an equivalent form of the cinching relation:
\begin{equation}
\begin{minipage}{1.5cm}
\begin{tikzpicture}[scale=1.000]
\draw[densely dotted, rounded corners] (-0.5,0) rectangle (1,1.5) ;
\draw[red,line width=0.08cm](0.5,0)--++(90:0.45) coordinate (dot);
\fill[red] (dot) circle (3.5pt);
\draw[red,line width=0.08cm](0,0)--++(90:1.5);   \end{tikzpicture}
  \end{minipage}
  \;=\;
   \begin{minipage}{1.5cm}\begin{tikzpicture}[scale=1.000]
\draw[densely dotted, rounded corners] (-0.5,0) rectangle (1,1.5) ;
%

  \draw[red,line width=0.08cm](0.5,0)--++(90:1.5);

\draw[red,line width=0.08cm] (0 ,0)--++(90:0.45) coordinate(hi);
\fill[red] (hi) circle (3.5pt);
  \end{tikzpicture}
  \end{minipage}
\;  + \;  \begin{minipage}{1.5cm}\begin{tikzpicture}[scale=1.000,yscale=-1.000]
\draw[densely dotted, rounded corners] (-0.5,0) rectangle (1,1.5) ;
%
\draw[red,line width=0.08cm](0,1.5) arc (180:360:0.25);


\draw[red,line width=0.08cm] (0.25,0)--++(90:0.45) coordinate(hi);
\fill[red] (hi) circle (3.5pt);
  \end{tikzpicture}
  \end{minipage}
\; - 
 \;
 \begin{minipage}{1.5cm}\begin{tikzpicture}[scale=1.000]
\draw[densely dotted, rounded corners] (-0.5,0) rectangle (1,1.5) ;
%
\draw[red,line width=0.08cm](0,0) to [out=90,in=180]
(0.5,0.45)   ;

\draw[red,line width=0.08cm](0.5,0)--++(90:1.5);
\draw[red,line width=0.08cm] (0-0.2 ,0.75)--++(90:0.3) coordinate(hi);
\fill[red] (hi) circle (3.5pt);
\draw[red,line width=0.08cm] (0-0.2 ,0.75)--++(-90:0.3) coordinate(hi);
\fill[red] (hi) circle (3.5pt);
  \end{tikzpicture}
  \end{minipage} \label{eq:dottedcinch}
\end{equation}

\begin{rem} \hfill
\begin{enumerate}
\item We have included the term ``cinched'' here only to avoid potential confusion with the original Elias--Williamson construction. 
Once we have shown that $\mathcal{H}_{\rm BS}$ is better behaved, and in particular \emph{always} categorifies the Hecke algebra, it is reasonable to redefine the terms ``diagrammatic Bott--Samelson category'' and ``diagrammatic Hecke category'' so that they are always ``cinched'' (i.e.~always include the cinching relation) (cf.~\cite{bowman-book}).

\item Many results for $\mathcal{H}_{\rm BS}$ carry over without change. 
For example, it is easy to see that the construction of the cinched Bott--Samelson category is functorial in the realization, and that localization is still well defined whenever the roots of the underlying realization are all non-zero. 
\end{enumerate}
\end{rem}

Recall the \defnemph{light leaves} and \defnemph{double leaves} constructions for $\mathcal{D}_{\rm BS}$ \cite[\S 6.1, \S 6.3]{EW-SoergelCalc}. 
These diagrammatic constructions carry over identically to the cinched Hecke category. 
The main result in this section is the following. 

\begin{thm}[{cf.~\cite[Theorem 6.11, Proposition 7.6]{EW-SoergelCalc}}] \label{thm:LLcinched} \hfill
%

The light leaves and double leaves constructions yield bases for the $\Hom$-spaces of the cinched Bott--Samelson category $\mathcal{H}_{\rm BS}$.
\end{thm}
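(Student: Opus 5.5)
The proof will follow Elias--Williamson's two-step argument: a spanning statement proved diagrammatically, and a linear independence statement proved by a faithful action.

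\textbf{Spanning.} In \cite[Theorem~6.11]{EW-SoergelCalc}, double leaves span because every diagram can be rewritten as a combination of double leaves. The rewriting uses only the defining relations of $\mathcal{D}_{\rm BS}$, together with the fact that polynomials can be pushed to the left (or right) edge. The only place Demazure/coroot surjectivity enters is to produce the idempotent decomposition $B_s B_s \cong B_s(1)\oplus B_s(-1)$. In particular, one needs some $\delta_s\in V$ with $\langle\alpha_s^\vee,\delta_s\rangle=1$ in order to write the identity of $B_sB_s$ as a sum of orthogonal idempotents. In $\mathcal{H}_{\rm BS}$ the cinching relation \eqref{eq:cinching} expresses $\mathrm{id}_{B_sB_s}$ directly as a sum of three terms, each factoring through $B_s$ (or through the empty word, via the broken strand). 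This is precisely what the spanning argument needs: whenever Elias--Williamson invoke the decomposition, we substitute \eqref{eq:cinching}, or its dotted form \eqref{eq:dottedcinch}.

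I will therefore go through the inductive reduction of \cite[\S 6--7]{EW-SoergelCalc}. This covers the reduction of idempotent-like local configurations and the pushing of polynomials through strands (``nil-Hecke''/polynomial forcing). At each step I will check that any use of $\delta_s$ can be replaced by a cinching move. The upshot is that the double leaves span every $\Hom$-space, and that $\Hom(\emptyset,\emptyset)=R$ is spanned by polynomials.

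\textbf{Linear independence.} This is the main obstacle. Elias--Williamson prove independence by localizing or by passing to Soergel bimodules, and both need nonzero roots and Demazure surjectivity. My first attempt will be a base change argument. By Lemma~\ref{lem:adscuniversality} and functoriality, take a universal realization over a polynomial or localized ring where every root and coroot is surjective. For instance, take the generic Cartan-matrix realization over $\ZZ[a_{st}]/(\text{relations})$, enlarged by adding a free vector $\delta_s$ with $\langle\alpha_s^\vee,\delta_s\rangle=1$; this is possible after adjoining variables. There, $\mathcal{H}_{\rm BS}=\mathcal{D}_{\rm BS}$, because the cinching relation follows from the polynomial forcing relation once $\delta_s$ exists. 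So double leaves form a basis there by \cite{EW-SoergelCalc}, and the $\Hom$-spaces are free $R$-modules of the rank predicted by the Hecke algebra.

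I then specialize to the given realization $V$. The Hom-spaces of $\mathcal{H}_{\rm BS}$ for $V$ should be the base change of those for the universal realization, since the presentation is compatible with extension of scalars along a realization morphism. Freeness over the universal $R$ then shows that the spanning double leaves for $V$ remain independent.

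The delicate point is justifying this base change for a general morphism of realizations, not just an extension of scalars. If it is hard, the fallback is a graded-rank count. Build a ``cinched'' Soergel-bimodule-type action of $\mathcal{H}_{\rm BS}$ on $R$-modules, where $B_s$ acts by an explicit rank-two free $R$-module whose structure maps are read off from \eqref{eq:cinching}. Then show that double leaves act independently by the usual upper-triangularity of leading terms.
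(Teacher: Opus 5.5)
Your spanning half is essentially the paper's route. The paper notes that the only use of coroot surjectivity in \cite[\S 7]{EW-SoergelCalc} is Claim~7.10, and replaces it by applying the dotted cinching relation \eqref{eq:dottedcinch} to $\mathrm{LL}_{\underline{x},\mathbf{e}}\otimes{\rm dot}_s$. One resulting term factors through $B_{\underline{ws}}$ and so is lower. One is a light leaf times $\alpha_s$. The remaining one is handled by pushing the dot down through rex moves with the Jones--Wenzl relation until it becomes a U0 strand or closes up a birdcage. So the substitution is not purely formal: the three cinching terms are not orthogonal idempotents, and all of them factor through $B_s$, none through the empty word. The genuine gap is in linear independence.

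\textbf{The base change you need cannot exist.} Your universal realization contains $\delta_s$ with $\langle\alpha_s^\vee,\delta_s\rangle=1$. This identity survives every change of ground ring and every surjective morphism of realizations: if $\phi\colon V_1\to V$ is one, then $\langle\alpha_s^\vee,\phi(\delta_s)\rangle=1$. Hence a realization $V$ that is not coroot surjective is never obtained from your universal object in the direction along which Hom-spaces base change. That is exactly the case where the theorem says something new, so the step you flag as delicate is in fact impossible.

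\textbf{The opposite direction fails for zero roots.} An injective morphism $V\hookrightarrow V''$ into a coroot surjective realization does transfer linear independence, since $R\hookrightarrow R''$ and light leaves go to light leaves. But morphisms send zero roots to zero roots, and $\alpha_s=0$ can occur (it only forces $2=0$ in $\Bbbk$). Then neither Elias--Williamson nor localization applies to $V''$. Your fallback fails in the same case: if $\alpha_s=0$ then $s$ acts trivially on $R$, so $R^s=R$. There is then no rank-two module, and no localization to separate leading terms.

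You are also starting from a wrong premise. Localization needs only nonzero roots, not coroot surjectivity, so when all roots are nonzero no universal object is needed at all.

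\textbf{The missing idea} is to remove zero roots by a lift rather than an embedding. Let $I=\{s:\alpha_s=0\}$, and set $V'=V\oplus\bigoplus_{s\in I}\Bbbk\varepsilon_s$, with $\alpha'_s=\varepsilon_s$ for $s\in I$ and all coroots vanishing on the $\varepsilon_s$. Then $V=V'/K$, where $K=\bigoplus_{s\in I}\Bbbk\varepsilon_s$ is pointwise $W$-fixed. By polynomial forcing, any polynomial from $K$ in any region slides to the left edge with no error terms, which gives $\mathcal{H}_{\rm BS}=R\otimes_{R'}\mathcal{H}'_{\rm BS}$. All roots of $V'$ are nonzero, so localization gives independence there. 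Together with spanning, which holds for every realization, the Hom-spaces of $\mathcal{H}'_{\rm BS}$ are free on double leaves, and this survives $R\otimes_{R'}-$.
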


As in $\mathcal{D}_{\rm BS}$ this immediately implies the following.

\begin{cor}[{cf.~\cite[Theorem 11.1, Assumption 11.25]{emtw}}] \label{cor:cinchedSoergelcatthm} \hfill

Suppose $\Bbbk$ is a henselian\footnote{E.g.~a field, or a complete local ring.} local ring. 
Then Soergel's categorification theorem holds for the cinched Hecke category $\mathcal{H}$.
\end{cor}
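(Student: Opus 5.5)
The plan is to run the standard argument of \cite[Chapter~11]{emtw} (originally \cite[\S 6]{EW-SoergelCalc}) verbatim, with Theorem \ref{thm:LLcinched} in place of the double leaves theorem for $\mathcal{D}_{\rm BS}$. The point is that that argument uses the realization in only two places. The first is the existence of a double leaves basis of the right graded rank. The second is the splitting $B_s \otimes B_s \cong B_s(1) \oplus B_s(-1)$, which is the only step where coroot surjectivity entered. The first is supplied by Theorem \ref{thm:LLcinched}. For the second I use the cinching relation \eqref{eq:cinching} directly. Read as an equation in $\operatorname{End}(B_sB_s)$, it writes $\mathrm{id}_{B_sB_s}$ as a sum of two morphisms factoring through $B_s(\pm 1)$, via the dotted trivalent maps and the broken-strand map, minus a correction term. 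I will check that these terms can be regrouped into two orthogonal idempotents $e_+ + e_- = \mathrm{id}$ with $e_\pm$ factoring through $B_s(\mp 1)$. The check uses the Frobenius relations (needle, unit and counit) and the barbell and polynomial-forcing relations. This gives the splitting with no surjectivity hypothesis.

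The remaining steps follow in order. (1) From the double leaves basis, each $\Hom$-space between Bott--Samelson objects is a free graded $R$-module of finite rank. Its graded rank is computed by the standard pairing $\langle \underline{b}_{\underline{x}}, \underline{b}_{\underline{y}} \rangle$ on $\mathbb{H}$ (\cite[Corollary~6.13]{EW-SoergelCalc}). In particular the degree $0$ parts are finitely generated $\Bbbk$-modules, since $R$ is nonnegatively graded with $R^0=\Bbbk$. (2) Because $\Bbbk$ is henselian local and degree-zero endomorphism rings are module-finite $\Bbbk$-algebras, idempotents lift modulo the radical. Hence the Karoubi envelope $\mathcal{H}$ is Krull--Schmidt. (3) For a reduced expression $\underline{w}$, let $I_{<w}$ be the ideal spanned by double leaves factoring through elements $<w$. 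Then $\operatorname{End}^0(B_{\underline{w}})/I_{<w}$ is spanned by the single degree-zero double leaf through $w$, namely the identity, so it equals $\Bbbk$. This yields a unique indecomposable summand $B_w$ of $B_{\underline{w}}$ not occurring in shorter objects. $B_w$ is independent of the chosen reduced expression up to isomorphism, via the rex moves, and every indecomposable is some $B_w(k)$. (4) Define the character map $\operatorname{ch}: [\mathcal{H}] \to \mathbb{H}$ using the graded ranks of the standard-filtration pieces, or equivalently the local intersection forms. Step (1) shows that it intertwines the Hom pairing with the standard form. Together with $B_sB_s \cong B_s(1)\oplus B_s(-1)$ and (3), the classes $[B_w]$ map to a unitriangular basis $\underline{H}_w + \sum_{y<w}\ZZ[v^{\pm1}]\underline{H}_y$. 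Hence $\operatorname{ch}$ is an isomorphism.

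I expect the main obstacle to be the first paragraph. One must write the cinching relation correctly as an idempotent decomposition and verify orthogonality without dividing by anything. The subtlety is that in $\mathcal{D}_{\rm BS}$ one used a $\delta$ with $\partial_s(\delta)=1$, and that element need not exist here. My plan there is to compose each term of \eqref{eq:cinching} with the dotted trivalent vertices and reduce via the needle relation and \eqref{eq:dottedcinch}. Everything else should transfer from \cite{emtw} unchanged once Theorem \ref{thm:LLcinched} is available.
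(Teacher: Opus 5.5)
Your proposal is correct and matches the paper's route: the paper likewise obtains the corollary immediately from Theorem~\ref{thm:LLcinched}, by running the standard argument for $\mathcal{D}_{\rm BS}$ (as in \cite[Chapter~11]{emtw}) with the cinched double leaves bases. The splitting you flag as the main obstacle is quick: $(\mathrm{startdot}\otimes\mathrm{id})\circ\mathrm{merge}$ is idempotent by the unit relation, and the other two terms of \eqref{eq:cinching} combine to $\mathrm{split}\circ(\mathrm{enddot}\otimes\mathrm{id}-\alpha_s\cdot\mathrm{merge})$, where $(\mathrm{enddot}\otimes\mathrm{id}-\alpha_s\cdot\mathrm{merge})\circ\mathrm{split}=\mathrm{id}_{B_s}$ by the counit and needle relations (and this splitting is in any case already forced by the graded ranks that Theorem~\ref{thm:LLcinched} provides).
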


\begin{proof}[Proof of Theorem \ref{thm:LLcinched}]
We follow the same strategy as in the proof of \cite[Proposition~7.6]{EW-SoergelCalc}, by first showing that the light leaves maps are linearly independent, and then showing that they span the relevant $\Hom$-space modulo lower terms. 

If we assume that the roots in $V$ are all non-zero then the usual localization argument carries over to $\mathcal{H}_{\rm BS}$ to prove linear independence. 
For completeness we will show that this assumption is unnecessary.

\begin{clm} \label{clm:LLlinindep}
The light leaves maps in $\mathcal{H}_{\rm BS}$ are linearly independent.
\end{clm}

\begin{proof}
Let $I=\{t \in S : \alpha_t=0\}$. 
In $V$ we observe that $\langle \alpha^\vee_s,\alpha_t \rangle=0$ whenever $t \in I$, so in particular $w(\alpha_t)=\alpha_t$ for any $t \in I$.
Define a new realization
\begin{equation*}
V'=V \oplus \bigoplus_{s \in I} \Bbbk \varepsilon_s
\end{equation*}
by setting
\begin{align*}
\alpha'_s& =\alpha_s & &\text{if $s \notin I$,} \\
\alpha'_s& =\varepsilon_s & &\text{if $s \in I$,} \\
\langle (\alpha'_s)^\vee, v \rangle& =\langle \alpha_s^\vee, v\rangle & &  \text{if  $s \in S$ and $v \in V$,} \\
\langle (\alpha'_s)^\vee, \varepsilon_t\rangle & =0 & &  \text{if $s \in S$ and $t \in I$.}
\end{align*}
Let $\mathcal{H}'_{\rm BS}$ denote the cinched Bott--Samelson category constructed from $V'$, and let $K=\bigoplus_{s \in I} \Bbbk \alpha'_s \leq V'$.
Due to the form of the diagrammatic relations, it is evident that $\mathcal{H}_{\rm BS}$ is the quotient of $\mathcal{H}'_{\rm BS}$ modulo any Soergel diagram with a term in $K$ in some region. 
Any such term in $K$ is fixed by $W$, so we can push this term all the way to the left using the polynomial forcing relation \cite[(5.2)]{EW-SoergelCalc} without introducing any extra error terms.
This shows that $\mathcal{H}_{\rm BS} = R \otimes_{R'} \mathcal{H}_{\rm BS}$, where $R$ and $R'$ denote the symmetric algebras of $V$ and $V'$ respectively.  
Now observe that the light leaves maps in $\mathcal{H}'_{\rm BS}$ are linearly independent by the usual localization argument, as the roots of $V'$ are all non-zero.
In each relevant $\Hom$-space, the light leaves maps for $\mathcal{H}'_{\rm BS}$ span a free left $R'$-submodule. 
Changing scalars to $R$ preserves freeness, so the corresponding light leaves maps are linearly independent in $\mathcal{H}_{\rm BS}$ too.
\end{proof}

To show that these maps also span the relevant $\Hom$-space modulo lower terms, the proof is identical to the usual argument in \cite[\S 7]{EW-SoergelCalc}, noting that the only use of coroot surjectivity there is in the proof of \cite[Claim~7.10]{EW-SoergelCalc}. 
In $\mathcal{H}_{\rm BS}$ it is enough to prove a slightly weaker version of this statement. 
See \cite[\S\S 6--7]{EW-SoergelCalc} for a description of the notation (and the definition of ``lower terms'').

\begin{clm}
Let $\underline{x}$ be an expression in $S$, and let $\mathbf{e}$ be a subsequence of $\underline{x}$ which expresses some $w \in W$.
Suppose $s \in S$ such that $ws<w$. 
Then 
\begin{equation*}
\mathrm{LL}_{\underline{x},\mathbf{e}} \otimes {\rm dot}_s = 
\begin{tikzpicture}[scale=0.4,baseline=(std)] 
\def\strings{6}
\pgfmathsetmacro{\stringgap}{1.0/\strings}
\node (std) at (0,0) [draw, rectangle, minimum width=2.4cm, minimum height=0.6cm] {$\mathrm{LL}_{\underline{x},\mathbf{e}}$};
\foreach \i in {1,...,\strings}
{
    \pgfmathsetmacro{\xoffset}{(\i-0.5)*\stringgap}
    \path ($(std.north west)!\xoffset!(std.north east)$) node (topb\i) {};
    \path (topb\i) ++(0,1.5cm) node (topa\i) {};
    \path ($(std.south west)!\xoffset!(std.south east)$) node (bota\i) {};
    \path (bota\i) 
                   ++(0,-1.5cm) node (botc\i) {};
}
\pgfmathsetmacro{\dotoffset}{(\strings+0.75)*\stringgap}
\path ($(std.west)!\dotoffset!(std.east)$) node[dot,fill=red] (dot) {};
\path (dot.center |- botc6.center) node (botdot) {};
\foreach \i in {1,...,6}
{
    \pgfmathsetmacro{\xoffset}{(\i-0.5)*\stringgap}
    \draw (bota\i.center) edge[string] (botc\i.center);
}
\draw (bota3.center) edge[string,red] (botc3.center);
\draw (dot.center) edge[string,red] (botdot.center);
\foreach \i in {2,...,5}
{
    \pgfmathsetmacro{\xoffset}{(\i-0.5)*\stringgap}
    \draw (topa\i.center) edge[string] (topb\i.center);
}
\draw (topa5.center) edge[string,red] (topb5.center);
\end{tikzpicture}
\end{equation*}
can be written as a linear combination of light leaves maps with polynomials in any region modulo lower terms.
\end{clm}

\begin{proof}
Apply the modified form of the cinching relation \eqref{eq:dottedcinch}:
\begin{equation*}
\begin{tikzpicture}[scale=0.4,baseline=(std)] 
\def\strings{6}
\pgfmathsetmacro{\stringgap}{1.0/\strings}
\node (std) at (0,0) [draw, rectangle, minimum width=2.4cm, minimum height=0.6cm] {$\mathrm{LL}_{\underline{x},\mathbf{e}}$};
\foreach \i in {1,...,\strings}
{
    \pgfmathsetmacro{\xoffset}{(\i-0.5)*\stringgap}
    \path ($(std.north west)!\xoffset!(std.north east)$) node (topb\i) {};
    \path (topb\i) ++(0,3.5cm) node (topa\i) {};
    \path ($(std.south west)!\xoffset!(std.south east)$) node (bota\i) {};
    \path (bota\i) 
                   ++(0,-1.5cm) node (botc\i) {};
}
\pgfmathsetmacro{\dotoffset}{(\strings+0.75)*\stringgap}
\path ($(std.west)!\dotoffset!(std.east)$) node[dot,fill=red] (dot) {};
\path (dot.center |- botc6.center) node (botdot) {};
\foreach \i in {1,...,6}
{
    \pgfmathsetmacro{\xoffset}{(\i-0.5)*\stringgap}
    \draw (bota\i.center) edge[string] (botc\i.center);
}
\draw (bota3.center) edge[string,red] (botc3.center);
\draw (dot.center) edge[string,red] (botdot.center);
\foreach \i in {2,...,5}
{
    \pgfmathsetmacro{\xoffset}{(\i-0.5)*\stringgap}
    \draw (topa\i.center) edge[string] (topb\i.center);
}
\draw (topa5.center) edge[string,red] (topb5.center);
\end{tikzpicture}
=
\begin{tikzpicture}[scale=0.4,baseline=(std)] 
\def\strings{6}
\pgfmathsetmacro{\stringgap}{1.0/\strings}
\node (std) at (0,0) [draw, rectangle, minimum width=2.4cm, minimum height=0.6cm] {$\mathrm{LL}_{\underline{x},\mathbf{e}}$};
\foreach \i in {1,...,\strings}
{
    \pgfmathsetmacro{\xoffset}{(\i-0.5)*\stringgap}
    \path ($(std.north west)!\xoffset!(std.north east)$) node (topb\i) {};
    \path (topb\i) ++(0,3.5cm) node (topa\i) {};
    \path ($(std.south west)!\xoffset!(std.south east)$) node (bota\i) {};
    \path (bota\i) 
                   ++(0,-1.5cm) node (botc\i) {};
}
\pgfmathsetmacro{\dotoffset}{(\strings+0.75)*\stringgap}
\path ($(std.west)!\dotoffset!(std.east)$) node (dot) {};
\path (dot.center |- botc6.center) node (botdot) {};
\path ($(topa5.center)!0.25!(topb5.center)$) node (turn1) {};
\path (turn1.center -| botdot.center) node (turn2) {};
\path ($(turn1.center)!0.5!(turn2.center)$) node (turn) {};
\path ($(topa5.center)!0.75!(topb5.center)$) node[dot,fill=red] (dotnew) {};
\foreach \i in {1,...,6}
{
    \pgfmathsetmacro{\xoffset}{(\i-0.5)*\stringgap}
    \draw (bota\i.center) edge[string] (botc\i.center);
}
\draw (bota3.center) edge[string,red] (botc3.center);
\draw (dot.center) edge[string,red] (botdot.center);
\foreach \i in {2,...,4}
{
    \pgfmathsetmacro{\xoffset}{(\i-0.5)*\stringgap}
    \draw (topa\i.center) edge[string] (topb\i.center);
}
\draw[corner, string, red] (topa5.center) |- (turn.center) -| (botdot.center);
\draw[string,red] (dotnew.center) edge (topb5.center);
\end{tikzpicture}
+
\begin{tikzpicture}[scale=0.4,baseline=(std)] 
\def\strings{6}
\pgfmathsetmacro{\stringgap}{1.0/\strings}
\node (std) at (0,0) [draw, rectangle, minimum width=2.4cm, minimum height=0.6cm] {$\mathrm{LL}_{\underline{x},\mathbf{e}}$};
\foreach \i in {1,...,\strings}
{
    \pgfmathsetmacro{\xoffset}{(\i-0.5)*\stringgap}
    \path ($(std.north west)!\xoffset!(std.north east)$) node (topb\i) {};
    \path (topb\i) ++(0,3.5cm) node (topa\i) {};
    \path ($(std.south west)!\xoffset!(std.south east)$) node (bota\i) {};
    \path (bota\i) 
                   ++(0,-1.5cm) node (botc\i) {};
}
\pgfmathsetmacro{\dotoffset}{(\strings+0.75)*\stringgap}
\path ($(std.west)!\dotoffset!(std.east)$) node (dot) {};
\path (dot.center |- botc6.center) node (botdot) {};
\path ($(topa5.center)!0.25!(topb5.center)$) node[dot,fill=red] (dotnew) {};
\path ($(topa5.center)!0.75!(topb5.center)$) node (turn1) {};
\path (turn1.center -| botdot.center) node (turn2) {};
\path ($(turn1.center)!0.5!(turn2.center)$) node (turn) {};
\foreach \i in {1,...,6}
{
    \pgfmathsetmacro{\xoffset}{(\i-0.5)*\stringgap}
    \draw (bota\i.center) edge[string] (botc\i.center);
}
\draw (bota3.center) edge[string,red] (botc3.center);
\foreach \i in {2,...,4}
{
    \pgfmathsetmacro{\xoffset}{(\i-0.5)*\stringgap}
    \draw (topa\i.center) edge[string] (topb\i.center);
}
\draw[corner, string, red] (topb5.center) |- (turn.center) -| (botdot.center);
\draw[string,red] (dotnew.center) edge (topa5.center);
\end{tikzpicture}
-
\begin{tikzpicture}[scale=0.4,baseline=(std)] 
\def\strings{6}
\pgfmathsetmacro{\stringgap}{1.0/\strings}
\node (std) at (0,0) [draw, rectangle, minimum width=2.4cm, minimum height=0.6cm] {$\mathrm{LL}_{\underline{x},\mathbf{e}}$};
\foreach \i in {1,...,\strings}
{
    \pgfmathsetmacro{\xoffset}{(\i-0.5)*\stringgap}
    \path ($(std.north west)!\xoffset!(std.north east)$) node (topb\i) {};
    \path (topb\i) ++(0,3.5cm) node (topa\i) {};
    \path ($(std.south west)!\xoffset!(std.south east)$) node (bota\i) {};
    \path (bota\i) 
                   ++(0,-1.5cm) node (botc\i) {};
}
\pgfmathsetmacro{\dotoffset}{(\strings+0.75)*\stringgap}
\path ($(std.west)!\dotoffset!(std.east)$) node (dot) {};
\path (dot.center |- botc6.center) node (botdot) {};
\path ($(topa5.center)!0.25!(topb5.center)$) node (turn1) {};
\path (turn1.center -| botdot.center) node (turn2) {};
\path ($(turn1.center)!0.5!(turn2.center)$) node (turn) {};
\path ($(topa5.center)!0.75!(topb5.center)$) node (dotnew) {};
\path ($(topa5.center)!0.375!(topb5.center)$) node[dot,fill=red] (barbella) {};
\path ($(topa5.center)!0.625!(topb5.center)$) node[dot,fill=red] (barbellb) {};
\foreach \i in {1,...,6}
{
    \pgfmathsetmacro{\xoffset}{(\i-0.5)*\stringgap}
    \draw (bota\i.center) edge[string] (botc\i.center);
}
\draw (bota3.center) edge[string,red] (botc3.center);
\draw (dot.center) edge[string,red] (botdot.center);
\foreach \i in {2,...,4}
{
    \pgfmathsetmacro{\xoffset}{(\i-0.5)*\stringgap}
    \draw (topa\i.center) edge[string] (topb\i.center);
}
\draw[corner, string, red] (topa5.center) |- (turn.center) -| (botdot.center);
\draw[corner,string,red] (topb5.center) |- (dotnew.center -| botdot.center);
\draw[string,red] (barbella.center) edge (barbellb.center);
\end{tikzpicture}
\end{equation*}
The second term on the right-hand side vanishes modulo lower terms as it factors through $B_{\underline{ws}}$.
The last term is already a light leaves map with an extra polynomial $\alpha_s$. 
So it is enough to reduce the first term. 
By the Jones--Wenzl relation \cite[(5.7)]{EW-SoergelCalc} dots ``propagate'' down through braids (see also \cite[(4.4)]{matrixrecursion}). 
Thus we can push the dot through any rex move until it either reaches the bottom of the diagram or it reaches a trivalent vertex.
In the first case we obtain a new light leaves map with a $\mathrm{U}0$-strand in place of a $\mathrm{U}1$-strand.
In the second case, we obtain a ``birdcage''
\begin{equation*}
\begin{tikzpicture}[scale=0.4,baseline=(origin.center)]
\coordinate (origin) at (0,0) {};
\path (origin) ++(0,-1.5cm) coordinate (bot1) {}
               ++(1cm,0) coordinate (bot2) {}
               ++(1cm,0) coordinate (bot3) {}
               ++(1.5cm,0) node (ellipses) {$\dotso$}
               ++(1.5cm,0) coordinate (bot4) {};
\path (origin) ++(0,3cm) coordinate (top1) {};
\path ($(bot1.center)!0.33!(top1.center)$) coordinate (along2);
\path ($(bot1.center)!0.5!(top1.center)$) coordinate (along3);
\path ($(bot1.center)!0.66!(top1.center)$) coordinate (along4);
\path ($(bot1.center)!0.85!(top1.center)$) node[dot,fill=red] (dot) {};
\draw[string,red] (bot1.center) edge (dot.center);
\draw[corner,string,red] (bot2.center) |- (along2.center)
                  (bot3.center) |- (along3.center)
                  (bot4.center) |- (along4.center);
\end{tikzpicture}
=
\begin{tikzpicture}[scale=0.4,baseline=(origin.center)]
\coordinate (origin) at (0,0) {};
\path (origin) ++(0,-1.5cm) coordinate (bot1) {}
               ++(1cm,0) coordinate (bot2) {}
               ++(1cm,0) coordinate (bot3) {}
               ++(1.5cm,0) node (ellipses) {$\dotso$}
               ++(1.5cm,0) coordinate (bot4) {};
\path (origin) ++(0,3cm) coordinate (top1) {};
\path ($(bot1.center)!0.33!(top1.center)$) coordinate (along2);
\path ($(bot1.center)!0.5!(top1.center)$) coordinate (along3);
\path ($(bot1.center)!0.66!(top1.center)$) ++(1cm,0) coordinate (turn);
\draw[corner,string,red] (bot2.center) |- (along2.center)
                  (bot3.center) |- (along3.center)
                  (bot4.center) |- (turn.center) -| (bot1.center);
\end{tikzpicture}
\end{equation*}
which also forms part of a light leaves map: the first strand of the birdcage is $\mathrm{U}1$, the last strand of the birdcage is $\mathrm{D}1$, and any intermediate strands are $\mathrm{D}0$.
\end{proof}

This shows that the light leaves maps for a basis for certain $\Hom$-spaces modulo lower terms.
Applying the same argument as \cite[\S 7.3]{EW-SoergelCalc} shows that the double leaves form bases for the entire $\Hom$-spaces of $\mathcal{H}_{\rm BS}$.
\end{proof}

\begin{rem}
The situation for diagrammatic \emph{singular} Hecke categories is completely different. 
In this setting the invariant rings $R^I$ for all \defnemph{finitary} $I \subseteq S$ (i.e.~for which $W_I$ is finite) play a key role. 
For all finitary $I \subset J$, the diagrammatic relations themselves posit the existence of a Demazure operator $\partial_J^I : R^I \rightarrow R^J$ which gives $R^J \subset R^I$ a Frobenius extension structure (see e.g.~\cite[\S 24.2.1]{emtw}. 
In particular, these Demazure operators must all be surjective for the diagrammatic singular Hecke category to be well defined. 

Unlike in the regular setting this condition naturally places restrictions on the base ring $\Bbbk$ and the Cartan matrix $A$. 
To see this, suppose $V$ is a realization for which the Demazure operators $\{\partial_J^I : I \subset J \text{ finitary}\}$ are all surjective. 
There is a natural morphism $\phi:V \rightarrow V_{A,{\rm sc}}$ by Lemma \ref{lem:adscuniversality}, which extends to a morphism $\phi:R \rightarrow R_{A,{\rm sc}}$ between the corresponding symmetric algebras.
One can check that $\partial_J^I = (\partial_J^I)_{A,{\rm sc}} \circ \phi$, where $(\partial_J^I)_{A,{\rm sc}}$ denotes the corresponding Demazure operator for $V_{A,{\rm sc}}$. 
This means that $V_{A,{\rm sc}}$ must have surjective Demazure operators too. 
The problem of determining precisely when this can happen dates back to Demazure's original paper introducing Demazure operators \cite{demazure}!
For the standard Cartan matrices of Weyl groups, he carefully showed that $2$ must be invertible outside of types $A$ and $C$, $3$ must be invertible in types $E$ and $F$, and $5$ must be invertible in type $E_8$.
\end{rem}

\section{Applications}

We conclude this paper with some applications of cinched Bott--Samelson and cinched Hecke categories. 
Many of these results have known analogues for $\mathcal{D}_{\rm BS}$ and $\mathcal{D}$ (with the added assumption of coroot surjectivity); the primary utility of the cinching construction is to simplify the proofs. 
As above $V$ is a $\Bbbk$-realization from which the categories $\mathcal{H}_{\rm BS}$, $\mathcal{H}$, $\mathcal{D}_{\rm BS}$, and $\mathcal{D}$ have been constructed.

The following is a version of \cite[Remark~3.19]{EW-SoergelCalc} for cinched Bott--Samelson categories.

\begin{lem} \label{lem:changerlz}
Let $V \rightarrow V'$ be a morphism of realizations, with symmetric algebras $R$ and $R'$ respectively.
The induced monoidal functor $\mathcal{H}_{\rm BS} \rightarrow \mathcal{H}'_{\rm BS}$ gives rise to an isomorphism $\mathcal{H}_{\rm BS} \cong R' \otimes_{R} \mathcal{H}_{\rm BS}$ of $\Bbbk$-linear categories, i.e.
\begin{equation*}
\Hom_{\mathcal{H}'_{\rm BS}}(B_{\underline{x}},B_{\underline{y}}) \cong R' \otimes_R \Hom_{\mathcal{H}_{\rm BS}}(B_{\underline{x}},B_{\underline{y}})
\end{equation*}
for any expressions $\underline{x},\underline{y}$ in $S$.
If $\Bbbk$ is a henselian local ring, then the Karoubi envelope of this functor maps indecomposable objects to indecomposable objects. 
\end{lem}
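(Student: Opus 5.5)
We would deduce the statement from Theorem \ref{thm:LLcinched}. A morphism of realizations $\phi: V \rightarrow V'$ sends roots to roots and pulls coroots back to coroots. So it induces a graded ring map $R \rightarrow R'$ compatible with the Demazure operators, $\partial'_s \circ \phi = \phi \circ \partial_s$, since $\phi^\ast(\alpha_s'^\vee)=\alpha_s^\vee$. Hence every generator and relation of $\mathcal{H}_{\rm BS}$, including the barbell $\alpha_s$, polynomial forcing and the cinching relation \eqref{eq:cinching}, is sent to the corresponding generator and relation of $\mathcal{H}'_{\rm BS}$. This gives the monoidal functor $F$. It is $R$-linear for the left action of polynomials, so it extends to a natural map $R' \otimes_R \Hom_{\mathcal{H}_{\rm BS}}(B_{\underline{x}},B_{\underline{y}}) \rightarrow \Hom_{\mathcal{H}'_{\rm BS}}(B_{\underline{x}},B_{\underline{y}})$, given by $f \otimes D \mapsto f \cdot F(D)$.

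To show this map is an isomorphism, we would use the double leaves basis. By Theorem \ref{thm:LLcinched}, which holds with no surjectivity hypotheses, each $\Hom$-space in $\mathcal{H}_{\rm BS}$ is a free left $R$-module with basis the double leaves $\mathbb{LL}$ attached to fixed choices of rexes and rex moves. These choices depend only on $(W,S)$ and the expressions, not on the realization. The functor $F$ sends each double leaf for $V$ to the double leaf for $V'$ built from the same choices. This holds because double leaves are composites of dots, trivalent vertices, braid vertices and identities, and $F$ fixes these generators; the $2m_{st}$-valent vertices are preserved because the Cartan matrices of $V$ and $V'$ coincide. So $F$ maps an $R$-basis to an $R'$-basis of the target. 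Tensoring the source basis with $R'$ therefore gives a bijection of bases, and the map is an isomorphism of free $R'$-modules. The one point to watch here is that the quantum-number scalars appearing in the relations agree for $V$ and $V'$. They do, since they are determined by the common Cartan matrix.

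For the statement about indecomposables, assume $\Bbbk$ is henselian local. Indecomposables in the Karoubi envelope correspond to primitive idempotents in the degree-zero endomorphism rings $\operatorname{End}^0(B_{\underline{x}})$. Tensoring with $R'$ does not change the degree-zero part: since $R$ and $R'$ are nonnegatively graded with $R^0=R'^0=\Bbbk$, and double leaves have nonnegative-degree polynomial coefficients, we get $\Hom^0_{\mathcal{H}'}(B_{\underline{x}},B_{\underline{y}}) \cong \Bbbk \otimes_{\Bbbk} \Hom^0$ in terms of the degree-zero parts of $R$-spans. More precisely, $(R'\otimes_R M)^0 = \bigoplus_{\deg \mathbb{LL}\le 0} R'^{-\deg \mathbb{LL}} \mathbb{LL}$, which matches $M^0$ because $R^{k}$ and $R'^{k}$ only agree for $k=0$. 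The cleaner route is to pass to the quotient by the positive-degree polynomials. Using the isomorphism, $\Bbbk \otimes_R \Hom_{\mathcal{H}_{\rm BS}} \cong \Bbbk \otimes_{R'} \Hom_{\mathcal{H}'_{\rm BS}}$, so the graded rank forms agree. Then one can apply the standard argument \cite[\S 11]{emtw}: the functor induces a surjection $\operatorname{End}(B) \rightarrow \operatorname{End}(F B)$ modulo positive-degree polynomials, which is moreover an isomorphism there, with kernel contained in the Jacobson radical in each degree-zero piece. Idempotents then lift over henselian $\Bbbk$, and local rings map to local rings.

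The main obstacle I expect is this last step: showing that $F$ sends the local ring $\operatorname{End}^0(B)$ onto a local ring. The plan is to show that $\operatorname{End}^0_{\mathcal{H}}(B) \rightarrow \operatorname{End}^0_{\mathcal{H}'}(FB)$ is an isomorphism modulo the ideal generated by morphisms factoring through positive-degree polynomials. Then a primitive idempotent stays primitive, because any nontrivial idempotent decomposition of $FB$ would reduce to one modulo the radical and lift back to $B$ by henselianity.
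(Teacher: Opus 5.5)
Your proof of the $\Hom$-formula is the paper's argument: double leaves go to double leaves, and these are bases for both realizations by Theorem~\ref{thm:LLcinched}. For the indecomposability claim you take a genuinely different route.

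The paper notes that the $\Bbbk$-valued local intersection forms of the two categories coincide. It then cites \cite[Corollary~11.75]{emtw}, which says these forms determine the decompositions of Bott--Samelson objects; this uses Krull--Schmidt, hence the henselian hypothesis.

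You instead let $J\subseteq E=\operatorname{End}^0(B)$ be the degree-zero part of $R^{>0}\cdot\operatorname{End}^\bullet(B)$, and define $J'\subseteq E'=\operatorname{End}^0(FB)$ similarly. The double leaves basis then gives $E/J\cong(\Bbbk\otimes_R\operatorname{End}^\bullet(B))^0\cong E'/J'$ compatibly with $F$, and primitive idempotents transfer through this isomorphism. This works, but one step is only asserted: you need $J$ and $J'$ to be nilpotent. This holds because left multiplication by polynomials commutes with composition, so $J^n\subseteq(R^{\ge 2n}\cdot\operatorname{End}^\bullet(B))^0$. That space is zero once $2n$ exceeds minus the smallest degree of a double leaf.

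Nilpotence lets idempotents lift, and it guarantees that nonzero idempotents survive the reduction. So henselianity is not actually needed in your argument. Your route is more elementary and proves preservation of indecomposables over any $\Bbbk$. The paper's route is a one-line citation that also yields equality of all decomposition multiplicities.

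Two things to clean up.

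First, your initial attempt is false. Double leaves can have negative degree: the double leaf $B_sB_s\to B_s\to B_sB_s$ has degree $-2$, so its multiples by $V=R^2$ lie in $\operatorname{End}^0(B_sB_s)$. Degree-zero endomorphism rings therefore genuinely grow under $V\to V'$. Drop that paragraph and keep only the quotient-by-positive-degree route.

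Second, $F$ is not onto $\operatorname{End}^0(FB)$. The correct form of your last step is that $F(e)$ is primitive, or, when $E$ is local, that $E'$ is local. Either follows from $E'/J'\cong E/J$ together with nilpotence of $J'$.
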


\begin{proof}
The induced monoidal functor maps light leaves to light leaves. 
Since the double leaves form bases for the $\Hom$-spaces this proves the first part.
It also shows that the local intersection forms (see e.g.~\cite[Definition~11.74]{emtw}) for the two categories coincide. 
The second part follows because local intersection forms determine the decompositions of the Bott--Samelson objects into direct sums of indecomposable objects \cite[Corollary~11.75]{emtw}
\end{proof}

We can use this to say a little more about the relationship between the cinched Bott--Samelson category $\mathcal{H}_{\rm BS}$ and the Elias--Williamson diagrammatic category $\mathcal{D}_{\rm BS}$.

\begin{prop}
If $V$ is coroot surjective, then $\mathcal{H}_{\rm BS}$ is equivalent to the Elias--Williamson diagrammatic category $\mathcal{D}_{\rm BS}$.
More generally, let $V \hookrightarrow V'$ be an embedding of $V$ into a coroot surjective realization $V'$, which extends to an embedding $R \hookrightarrow R'$ of their symmetric algebras. 
Let $\mathcal{D}'_{\rm BS}$ denote the Elias--Williamson diagrammatic category constructed from $V'$. 
Then there is an embedding $\mathcal{H}_{\rm BS} \rightarrow \mathcal{D}'_{\rm BS}$ which induces an isomorphism $\mathcal{D}'_{\rm BS} \cong R' \otimes_R \mathcal{H}_{\rm BS}$. 
\end{prop}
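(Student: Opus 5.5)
Two things have to be shown: first, that coroot surjectivity makes the cinching relation \eqref{eq:cinching} hold already in $\mathcal{D}_{\rm BS}$; second, the base-change statement for an embedding $V \hookrightarrow V'$, which then reduces to Lemma \ref{lem:changerlz}.

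\textbf{Step 1: cinching holds in $\mathcal{D}_{\rm BS}$ when $V$ is coroot surjective.} Choose $\delta_s \in V$ with $\langle \alpha_s^\vee, \delta_s\rangle = 1$. Start from the identity map on $B_sB_s$. Insert a barbell $\alpha_s = \partial_s(\delta_s)\alpha_s$ via the relation expressing a broken strand $\alpha_s$ in terms of $\delta_s$ and $s(\delta_s)$ on either side, that is, the polynomial forcing relation \cite[(5.2)]{EW-SoergelCalc}. Then use the standard argument from \cite[(5.14)]{EW-SoergelCalc} that produces the decomposition $B_sB_s \cong B_s(1)\oplus B_s(-1)$.

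Concretely, apply the forcing relation to $\delta_s$ placed between the two strands, combined with the needle and unit relations. This writes the identity of $B_sB_s$ as the first two diagrams of \eqref{eq:cinching}, with $\delta_s$ terms attached, minus correction terms. Since the right-hand side of \eqref{eq:cinching} contains no polynomial, the $\delta_s$-dependence must cancel. I would organise this by checking \eqref{eq:cinching} on the equivalent dotted form \eqref{eq:dottedcinch}, with a dot on the right strand, and postcomposing with a dot. After that, the identity reduces to the known one-color relation
\[
\text{dot}\otimes\text{id} + \text{id}\otimes\text{dot}\text{-type terms} = \text{barbell-type terms},
\]
which follows by writing $\alpha_s$ on a broken strand and forcing $\delta_s$ across.

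Hence the quotient map $\mathcal{D}_{\rm BS}\to\mathcal{H}_{\rm BS}$ is the identity on objects and an isomorphism on morphisms. This step is the main obstacle: it is a careful one-color diagram computation, and the precise signs and placement of $\delta_s$ must work out.

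\textbf{Step 2: the general statement.} Apply Step 1 to $V'$ to get an identification $\mathcal{D}'_{\rm BS}=\mathcal{H}'_{\rm BS}$. Then Lemma \ref{lem:changerlz} for the morphism $V\to V'$ gives $\mathcal{H}'_{\rm BS}\cong R'\otimes_R\mathcal{H}_{\rm BS}$, so $\mathcal{D}'_{\rm BS}\cong R'\otimes_R\mathcal{H}_{\rm BS}$.

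\textbf{Step 3: the functor is an embedding.} By Theorem \ref{thm:LLcinched}, each $\Hom_{\mathcal{H}_{\rm BS}}(B_{\underline{x}},B_{\underline{y}})$ is a free left $R$-module with basis the double leaves. The induced functor sends this basis to the double leaves basis over $R'$. Because $R\hookrightarrow R'$ is injective, the map $M\to R'\otimes_R M$ is injective for every free module $M$, so the functor is faithful. The description of the $\Hom$-spaces above then exhibits $\mathcal{H}_{\rm BS}$ as an $R$-form of $\mathcal{D}'_{\rm BS}$.
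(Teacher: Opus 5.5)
Your proposal is correct and follows the paper's proof. The paper likewise notes that coroot surjectivity makes the cinching relation already hold in $\mathcal{D}_{\rm BS}$, but it simply cites \cite[(5.14)--(5.15)]{EW-SoergelCalc} instead of carrying out the $\delta_s$-forcing computation you sketch; that computation does go through on the dotted form \eqref{eq:dottedcinch}. It then composes $\mathcal{H}_{\rm BS}\to\mathcal{H}'_{\rm BS}$ with $\mathcal{H}'_{\rm BS}\simeq\mathcal{D}'_{\rm BS}$ via Lemma~\ref{lem:changerlz}. Your Step~3, which deduces faithfulness from the free double-leaves bases of Theorem~\ref{thm:LLcinched} and the injectivity of $R\hookrightarrow R'$, is a correct detail that the paper leaves implicit.
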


In other words, the cinched Bott--Samelson category $\mathcal{H}_{\rm BS}$ is equivalent to the usual Elias--Williamson diagrammatic category $\mathcal{D}_{\rm BS}$ whenever the latter is well behaved.
Moreover $\mathcal{H}_{\rm BS}$ is always a monoidal subcategory of a well-behaved Elias--Williamson diagrammatic category $\mathcal{D}'_{\rm BS}$. 

\begin{proof}
If $V$ is coroot surjective, then the cinching relation already holds in $\mathcal{D}_{\rm BS}$ \cite[(5.14)--(5.15)]{EW-SoergelCalc}. 

For general $V$, let $V'$ be a coroot surjective realization with an embedding $V \hookrightarrow V'$, and let $\mathcal{H}'_{\rm BS}$ denote the corresponding cinched Bott--Samelson category.
The composition of the induced functor $\mathcal{H}_{\rm BS} \rightarrow \mathcal{H}'_{\rm BS}$ with the equivalence $\mathcal{H}'_{\rm BS} \xrightarrow{\sim} \mathcal{D}'_{\rm BS}$ gives the $\Hom$-formula above.
\end{proof}

Next, we recall that a consequence of Soergel's categorification theorem is that the indecomposable objects give rise to a \defnemph{canonical basis} of the Hecke algebra. 
For example, the \defnemph{$p$-canonical} or \defnemph{$p$-Kazhdan--Lusztig basis} arises from the Hecke category for a Cartan realization of a finite or affine Weyl group over a field $\Bbbk$ of characteristic $p$. 
In principle such canonical bases depend on the realization, and indeed there are numerous examples which showcase dependency on the characteristic $p$ or the Cartan matrix $A$. 
In fact, these essentially turn out to be the only things the canonical basis can depend on. 
This has been observed before for the $p$-canonical basis \cite[Remark~10.17(2)]{rw-tilting}. 
The only potential difficulty in formulating a more general result is describing precisely what it means for two realizations over different base rings to have ``the same'' Cartan matrix.

\begin{prop}
Suppose $\Bbbk$ is a henselian local ring with residue field $\mathbb{F}$. 
Let $A$ denote the Cartan matrix of the realization $V$, and let $\mathbb{F}'$ be the subfield of $\mathbb{F}$ generated by its prime subfield and the images of the entries $a_{st}$ of $A$ in $\mathbb{F}$. 
The canonical basis of the Hecke algebra arising from $\mathcal{H}$ depends only on $\mathbb{F}'$ and image of $A$ in $\mathbb{F}'$. 
\end{prop}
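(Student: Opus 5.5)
The idea is to reduce everything, via morphisms of realizations, to a canonical realization built only from the Cartan matrix over $\mathbb{F}'$, and then use Lemma \ref{lem:changerlz} to see that indecomposable objects, and their graded characters, are preserved along these morphisms.

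The canonical basis element attached to $w$ is the character of the indecomposable $B_w$ in $\mathcal{H}$. By Soergel's categorification theorem (Corollary \ref{cor:cinchedSoergelcatthm}), this character is determined by the ranks of the local intersection forms, or equivalently by the graded ranks of the Hom-spaces between Bott--Samelson objects modulo lower terms. Over a henselian local ring these ranks may be computed after reducing to the residue field. Concretely, idempotents lift and the local intersection forms are matrices over $\Bbbk$ whose rank behaviour is detected modulo the maximal ideal. So I would first replace $\Bbbk$ by $\mathbb{F}$ and $V$ by $\mathbb{F}\otimes_\Bbbk V$. Since the double leaves are bases (Theorem \ref{thm:LLcinched}), the Hom-spaces commute with base change, which justifies this step.

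Next, over the field $\mathbb{F}$, Lemma \ref{lem:adscuniversality} gives morphisms $V_{A,{\rm ad}} \to V \to V_{A,{\rm sc}}$. By Lemma \ref{lem:changerlz} each morphism sends indecomposables to indecomposables, and it preserves characters because it sends $B_{\underline{x}}$ to $B_{\underline{x}}$. Hence the canonical basis for $V$ agrees with that of $V_{A,{\rm ad}}$ over $\mathbb{F}$, which depends only on $A \in \mathrm{Mat}_S(\mathbb{F})$. Finally, the entries of $A$ lie in $\mathbb{F}'$, so $V_{A,{\rm ad}}$ over $\mathbb{F}$ is the base change of $V_{A,{\rm ad}}$ over $\mathbb{F}'$. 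Field extension preserves the ranks of the local intersection forms, because rank is invariant under field extension. Therefore the canonical basis is determined by $\mathbb{F}'$ and the image of $A$.

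I expect the main obstacle to be the first reduction from $\Bbbk$ to $\mathbb{F}$, which is not a morphism of realizations over a fixed ring. I would argue it via the standard fact that, over a henselian local ring, the decomposition of $B_{\underline{x}}$ is controlled by the reductions modulo $\mathfrak{m}$ of the local intersection forms, as in \cite[Corollary~11.75]{emtw}. This fact combines with the base change compatibility of double leaves. The remaining steps follow from the rank invariance argument already given in Lemma \ref{lem:changerlz}.
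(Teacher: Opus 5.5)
Your proposal is correct and uses the same ingredients as the paper: reduction to the residue field $\mathbb{F}$ by idempotent lifting, the morphism $V_{A,{\rm ad}} \to V$ combined with Lemma~\ref{lem:changerlz}, and invariance of the ranks of local intersection forms under field extension. The only difference is the order of the last two steps. You descend from $\mathbb{F}$ to $\mathbb{F}'$ after passing to the adjoint realization, whereas the paper first descends $V$ itself to the subfield. Your order is slightly cleaner, because $V_{A,{\rm ad}}$ over $\mathbb{F}$ is visibly the base change of the adjoint realization over $\mathbb{F}'$. A general $V$, by contrast, need not admit an $\mathbb{F}'$-form; for example, its roots may span a subspace not defined over $\mathbb{F}'$.
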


\begin{proof}
If $B$ is an indecomposable object in $\mathcal{H}$, then its image in $\mathbb{F} \otimes_{\Bbbk} \mathcal{H}$ is also indecomposable by the usual idempotent lifting argument.
The latter is the cinched Hecke category for the $\mathbb{F}$-realization $\mathbb{F} \otimes_{\Bbbk} V$, so without loss of generality we may assume that $\Bbbk$ is a field. 

Let $\Bbbk'$ be the subfield of $\Bbbk$ generated by the prime subfield and the images of the entries of the Cartan matrix, and let $V'$ denote the restriction of $V$ to a $\Bbbk'$-module. 
Clearly $V'$ is a $\Bbbk'$-realization, and for the corresponding cinched Bott--Samelson category $\mathcal{H}_{\rm BS}'$ we have $\mathcal{H}_{\rm BS}=\Bbbk \otimes_{\Bbbk'} \mathcal{H}'_{\rm BS}$, and similarly for the local intersection forms of the two categories. 
Recall that the decomposition of each Bott--Samelson object into a direct sum of indecomposable objects is determined by the ranks of these forms. 
As the rank of a matrix does not change after field extension, this shows that indecomposable objects in $\mathcal{H}'$ remain indecomposable in $\mathcal{H}$.
So without loss of generality we may further assume that $\Bbbk=\Bbbk'$.

To complete the proof we consider the morphism $V_{A,{\rm ad}} \rightarrow V$, which gives rise to a monoidal functor $\mathcal{H}_{A,{\rm ad}} \rightarrow \mathcal{H}$ by Lemma \ref{lem:changerlz}.
This functor maps indecomposable objects to indecomposable objects, so the canonical bases arising from both cinched Hecke categories must coincide.
\end{proof}

Finally we apply our cinched constructions to the parabolic anti-spherical setting, to show that any realization can be used to categorify the anti-spherical module of the Hecke algebra. 

Fix some subset $I \subseteq S$. 
We write $W_I$ to denote the subgroup generated by $I$, and $\prescript{I}{}{W}$ for the set of minimal length representatives for the right cosets $W_I \backslash W$. 
An expression $\underline{x}$ in $S$ is called an \defnemph{$I$-sequence} if it starts with some element $s \in I$. 
We define the \defnemph{(cinched) anti-spherical Bott--Samelson category $\mathcal{N}_{\rm BS}$} to be the quotient of $\mathcal{H}_{\rm BS}$ by the ideal generated by $\{B_{\underline{x}} : \underline{x} \text{ is an $I$-sequence}\}$.
It is a right module category for $\mathcal{H}_{\rm BS}$, i.e.~there is a right monoidal action of $\mathcal{H}_{\rm BS}$ on $\mathcal{N}_{\rm BS}$. 
We call the Karoubi envelope of $\mathcal{N}_{\rm BS}$ the \defnemph{(cinched) anti-spherical category $\mathcal{N}$}. 

Recall that the light leaves construction for $\mathcal{H}_{\rm BS}$ generalizes to $\mathcal{N}_{\rm BS}$, with anti-spherical light leaves and double leaves \cite[Definition~5.1]{LW-antispher}.

\begin{thm}[{cf.~\cite[Theorem~5.3]{LW-antispher}}] \label{thm:antispherLLbasis}
The anti-spherical light leaves and double leaves constructions yield bases for the $\Hom$-spaces of the cinched anti-spherical Bott--Samelson category $\mathcal{N}_{\rm BS}$.
\end{thm}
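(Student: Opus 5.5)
The plan mirrors the proof of Theorem~\ref{thm:LLcinched}, adapting Libedinsky--Williamson's argument \cite[Theorem~5.3]{LW-antispher}. That argument has two halves. First, the anti-spherical light leaves span each relevant $\Hom$-space modulo lower terms. Second, they are linearly independent. In \cite{LW-antispher} the parabolic property (together with Demazure surjectivity) is used only in the independence half, which is carried out via a localization or parabolic base-change argument. So the plan is to reuse their spanning argument essentially verbatim, with the cinching relation substituting for coroot surjectivity exactly as in the Claim in the proof of Theorem~\ref{thm:LLcinched}, and to replace the independence argument with a base-change reduction.

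\emph{Spanning.} Given a light leaf $\mathrm{LL}_{\underline{x},\mathbf{e}}$ in $\mathcal{N}_{\rm BS}$ and $s$ with $ws<w$, apply \eqref{eq:dottedcinch} as before. This produces three kinds of terms. One term factors through $B_{\underline{ws}}$ and is therefore lower. One is a light leaf times $\alpha_s$. One has a dot that propagates down through rex moves, ending either in a $\mathrm{U}0$ strand or in a birdcage. The only new feature is that a term may now end in an expression with $ws\notin \prescript{I}{}{W}$. Such terms factor through an $I$-sequence and therefore vanish in $\mathcal{N}_{\rm BS}$, which is exactly the mechanism of \cite[\S 5]{LW-antispher}. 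Polynomials are then pushed to the left using polynomial forcing. On the far left, polynomials act on $\mathcal{N}_{\rm BS}$ through the quotient $R\to\Bbbk$, via the standard argument that $\alpha_s$ for $s\in I$ kills everything (a dotted $s$-strand on the left is an $I$-sequence). Together these give the spanning statement, and the double leaves then span the full $\Hom$-spaces as in \cite[\S 7.3]{EW-SoergelCalc}.

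\emph{Independence.} Here the plan is to reduce to a well-behaved realization. Choose a morphism of realizations $V\to V'$ where $V'$ satisfies the hypotheses of \cite{LW-antispher}, namely nonzero roots, coroot surjectivity and the parabolic property for $I$. A natural candidate is a generic enlargement of $V$ (adjoining free variables as in Claim~\ref{clm:LLlinindep}), or alternatively a lift of $A$ to a realization over a larger ring that is then specialized back down. Lemma~\ref{lem:changerlz} gives $\mathcal{H}'_{\rm BS}\cong R'\otimes_R\mathcal{H}_{\rm BS}$, and light leaves map to light leaves under this isomorphism. Passing to the anti-spherical quotient is right exact, so $\mathcal{N}'_{\rm BS}$ is a base change of $\mathcal{N}_{\rm BS}$, now over $\Bbbk$. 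However, a base change only carries independence from the smaller category to the larger one, and we need the converse. It is therefore better to run the argument in reverse: construct a realization $\tilde V$ over some ring $\tilde\Bbbk$ with a surjection $\tilde\Bbbk\to\Bbbk$ (or a morphism $\tilde V\to V$) such that $\tilde V$ has the parabolic property and $\mathcal{N}_{\rm BS}\cong\Bbbk\otimes_{\tilde\Bbbk}\tilde{\mathcal{N}}_{\rm BS}$. Independence for $\tilde{\mathcal N}_{\rm BS}$ means the anti-spherical double leaves form a \emph{free} basis, and freeness survives base change. Together with spanning, this gives a basis.

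\emph{Main obstacle.} The hardest step is producing $\tilde V$. For this I would first try the following: take $\tilde\Bbbk=\ZZ[a_{st}]$ modulo the relations \eqref{eq:minpolycond}, or a suitable localization of it, together with its simply-connected realization extended by free generic variables. Then I would check the parabolic property generically, meaning $R^{W_I}$-invariants behave well over the fraction field. If that fails, the alternative is to prove independence directly by building the anti-spherical module functorially, for example by realizing $\mathcal{N}_{\rm BS}$ as $\Bbbk\otimes_{R}\mathcal{H}_{\rm BS}$ modulo $I$-sequences. I would then count graded ranks using the double leaves basis of $\mathcal{H}_{\rm BS}$ from Theorem~\ref{thm:LLcinched}, showing that the kernel is spanned exactly by double leaves passing through non-minimal coset representatives. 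This rank comparison, which avoids any parabolic hypothesis, is the approach I expect to work in general.
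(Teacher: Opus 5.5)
\textbf{There is a genuine gap in the independence half.} Your architecture for that half matches the paper's: lift $V$ to a realization over a ring mapping to $\Bbbk$ to which Libedinsky--Williamson's theorem applies, obtain a \emph{free} basis of anti-spherical double leaves there, and specialize. You also correctly see that base change only carries freeness downward from the lift.

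The proposal stops at the step that carries the content. You never exhibit a lift that both specializes to $V$ and provably has the parabolic property.
\begin{itemize}
\item ``Check the parabolic property generically, meaning $R^{W_I}$-invariants behave well over the fraction field'' is not an argument.
\item Your fallback is to show that the kernel of $\mathcal{H}_{\rm BS}\to\mathcal{N}_{\rm BS}$ is spanned exactly by double leaves through non-minimal coset representatives. That is a reformulation of the linear independence you are trying to prove. It is precisely where Libedinsky--Williamson need localization and the parabolic property, and you offer no replacement.
\item Your candidate lift has the wrong variance. Lemma~\ref{lem:changerlz} transports bases along a morphism $V_1\to V_2$ from $V_1$ to $V_2$. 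There is always a morphism $V_{A,{\rm ad}}\to V$, but in general none $V_{A,{\rm sc}}\to V$ (take $V=V_{A,{\rm ad}}$ for $A_1$ over $\ZZ$). So the lift must have free roots plus extra generators mapping onto the rest of $V$; it cannot be built from the simply-connected realization.
\end{itemize}

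\textbf{The missing idea is to make the lift generic enough to have two useful specializations.} The paper takes $V'$ free on roots $\alpha'_s$ and on lifts $f'_i$ of a basis of a complement to the root subrealization. The pairings are generic, $\langle(\alpha'_s)^\vee,\alpha'_t\rangle=x_{s,t}$ and $\langle(\alpha'_s)^\vee,f'_i\rangle=y_{s,i}$, over $\mathbb{O}[x_{s,t},y_{s,i}]$, imposing only $x_{s,t}x_{t,s}=4\cos^2(\pi/m_{st})$.
\begin{itemize}
\item Setting $x_{s,t}=\langle\alpha_s^\vee,\alpha_t\rangle$ and $y_{s,i}=\langle\alpha_s^\vee,f_i\rangle$ recovers $V$.
\item Setting $x_{s,t}=-2\cos(\pi/m_{st})$ gives a realization whose root subrealization is the geometric representation of $(W,S)$, which is faithful.
\end{itemize}
An element of $W$ acting trivially on $V'$ would act trivially on every specialization, so $V'$ is faithful and hence has the parabolic property. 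Therefore the anti-spherical double leaves form a free basis for $V'$. That basis specializes to a basis for $V$, giving spanning and independence at once.

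\textbf{Remarks on your spanning half.} In this route a separate spanning argument for $V$ is not needed, and yours has two slips:
\begin{itemize}
\item If $w\in\prescript{I}{}{W}$ and $ws<w$, then automatically $ws\in\prescript{I}{}{W}$, so the ``new feature'' you describe cannot occur.
\item The vanishing of $\alpha_s$ on the left for $s\in I$ does not make all of $R$ act through $\Bbbk$; take $I=\emptyset$.
\end{itemize}
A cinched spanning argument does have a real use: it justifies applying Libedinsky--Williamson to $V'$ itself. $V'$ need not be coroot surjective; for $A_1$, the coroot's image is the proper ideal generated by $2$ and the $y_{s,i}$.
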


Libedinsky--Williamson proved this when the underlying realization $V$ is Demazure surjective and satisfies the ``parabolic property'' \cite[\S 2.3]{LW-antispher}.

\begin{proof}
Let $U$ denote the root subrealization of $V$, and let $K$ be a complement for $U$ inside $V$. 
Choose a basis $f_1,\dotsc,f_r$ of $K$. 
Let $\mathbb{O}$ be the subring of $\overline{\QQ}$ generated by the algebraic integers $4\cos^2(\pi/m_{st})$ for all $s,t \in S$ such that $m_{st}<\infty$.
By definition, for every $s,t \in S$ the product $\langle \alpha_s^\vee,\alpha_t\rangle \langle \alpha_t^\vee, \alpha_s \rangle$ in $\Bbbk$ can be lifted to $\mathbb{O}$.
Next we use $\mathbb{O}$ to define a ring
\begin{equation*}
\Bbbk'=\frac{\mathbb{O}[x_{s,t},y_{s,i}: s,t \in S,\ 1 \leq i \leq r]}{(x_{s,t}x_{t,s} - 4\cos^2(\pi/m_{st}) \text{ for all $s,t \in S$ with $m_{st}<\infty$})} \text{.}
\end{equation*}
Now we define a new $\Bbbk'$-realization
\begin{equation*}
V' = \left(\bigoplus_{s \in S} \Bbbk'\alpha'_s \right) \oplus \left(\bigoplus_{i=1}^r \Bbbk' f'_i\right)
\end{equation*}
with linearly independent roots $\{\alpha'_s : s \in S\}$ by setting 
\begin{align*}
\langle (\alpha'_s)^\vee, f'_i \rangle & = y_{s,i} \text{ for all $s \in S$ and $1 \leq i \leq r$,} \\
\langle (\alpha'_s)^\vee, \alpha'_t \rangle & =x_{s,t} \text{ for all $s,t \in S$.}
\end{align*}

Consider the specialization of $V'$ where $x_{s,t}=-2\cos(\pi/m_{st})$ for all $s,t \in S$ (suitably interpreting $\cos(\pi/\infty)=\cos 0=1$). 
The root subrealization of this specialization is equivalent to the geometric representation, as defined in \cite[\S 5.3]{humphreys}. 
As this representation is faithful, the original realization $V'$ must also be faithful, and therefore satisfies the parabolic property. 
Thus by \cite[Theorem~5.3]{LW-antispher} the anti-spherical light leaves form bases for the relevant $\Hom$-spaces in $\mathcal{N}'_{\rm BS}$, where $\mathcal{N}'_{\rm BS}$ is the anti-spherical Bott--Samelson category defined by $V'$. 
But $V'$ also specializes to $V$ in the obvious way, by setting $x_{s,t}=\langle \alpha_s^\vee,\alpha_t\rangle$ and $y_{s,i}=\langle \alpha_s^\vee,f_i\rangle$ for all $s,t \in S$ and $1 \leq i \leq r$. 
Since the bases for the $\Hom$-spaces remain bases after specialization, this proves the result.
\end{proof}

\begin{cor}[{cf.~\cite[Theorem~6.2]{LW-antispher}}] \label{cor:antispherSoergelcatthm}
Let $\Bbbk$ be a henselian local ring. 
The category $\mathcal{N}$ categorifies the anti-spherical module of the Hecke algebra. 
\end{cor}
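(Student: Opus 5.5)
The plan is to deduce Corollary \ref{cor:antispherSoergelcatthm} from Theorem \ref{thm:antispherLLbasis} in the same way that \cite[Theorem~6.2]{LW-antispher} follows from \cite[Theorem~5.3]{LW-antispher}. The structure of the argument mirrors the deduction of Corollary \ref{cor:cinchedSoergelcatthm} from Theorem \ref{thm:LLcinched}.

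First I would pin down the target. The anti-spherical module is $\mathbb{N}=\mathrm{sgn}\otimes_{\mathbb{H}_I}\mathbb{H}$, with standard basis $\{N_w : w\in\prescript{I}{}{W}\}$. Since $\mathcal{N}$ is a right module category for $\mathcal{H}$, its split Grothendieck group $[\mathcal{N}]$ is a right module over $[\mathcal{H}]\cong\mathbb{H}$; the latter identification uses Corollary \ref{cor:cinchedSoergelcatthm}. There is a module map $[\mathcal{H}]\to[\mathcal{N}]$ induced by the quotient functor. Because $B_s$ is zero in $\mathcal{N}$ for $s\in I$, the class $[B_s]=\underline{H}_s$ acts as zero on the image of the monoidal unit. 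This gives a surjective $\mathbb{H}$-module map $\mathbb{N}\to[\mathcal{N}]$ sending $N_{\rm id}\mapsto[B_\varnothing]$. Surjectivity holds because every object of $\mathcal{N}$ is a summand of some $B_{\underline{x}}$.

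The next step is to show this map is injective, i.e.\ to produce a character map $[\mathcal{N}]\to\mathbb{N}$. Following \cite{LW-antispher}, I would define for each $w\in\prescript{I}{}{W}$ the graded rank of the $\Hom$-spaces modulo lower terms, using the anti-spherical double leaves basis from Theorem \ref{thm:antispherLLbasis}. By that theorem, $\Hom_{\mathcal{N}_{\rm BS}}(B_{\underline{x}},B_{\underline{y}})$ is free over $\Bbbk$ with graded rank given by the anti-spherical standard form $(N_{\underline{x}},N_{\underline{y}})$. Here $N_{\underline{x}}=N_{\rm id}\underline{H}_{\underline{x}}$, and the light leaves count exactly the subsequences whose defects define the anti-spherical expansion.

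The henselian hypothesis then enters. Over a henselian local ring, $\mathcal{N}$ is Krull--Schmidt, since the $\Hom$-spaces are finite free $\Bbbk$-modules in each degree. Indecomposables are classified by the local intersection forms, exactly as in \cite[\S 11]{emtw}. Concretely, for each reduced $I$-minimal expression $\underline{w}$ of $w\in\prescript{I}{}{W}$, the object $B_{\underline{w}}$ has a unique new summand $B^I_w$, and these (with shifts) give all indecomposables. Hence $[\mathcal{N}]$ is free on $\{[B^I_w]\}$, whose images $N_w+\sum_{y<w}(\ldots)N_y$ are unitriangular in the standard basis. Therefore the two modules have matching bases, and the map $\mathbb{N}\to[\mathcal{N}]$ is an isomorphism.

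The main obstacle I expect is checking that nothing in the Libedinsky--Williamson deduction used Demazure surjectivity or the parabolic property beyond their Theorem~5.3. My first move would be to trace their proof and confirm it relies only on the double leaves basis and the general Krull--Schmidt/intersection-form machinery. Both of these are now available for the cinched category via Theorem \ref{thm:antispherLLbasis} and the argument of Lemma \ref{lem:changerlz}.
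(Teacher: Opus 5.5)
Your plan follows the paper's route: the paper gives no separate proof, and the corollary is deduced from Theorem \ref{thm:antispherLLbasis} exactly as \cite[Theorem~6.2]{LW-antispher} is deduced from \cite[Theorem~5.3]{LW-antispher}, since Demazure surjectivity and the parabolic property enter Libedinsky--Williamson's argument only through the basis theorem. Two small corrections are needed. First, $\mathcal{N}_{\rm BS}$ is defined here as a quotient of $\mathcal{H}_{\rm BS}$ without killing polynomials in the leftmost region, so its $\Hom$-spaces are not of finite rank over $\Bbbk$ (for $I=\emptyset$ you simply recover $\mathcal{H}_{\rm BS}$); graded ranks must therefore be taken relative to the left polynomial action, through which $\alpha_s$ for $s\in I$ acts by zero, or after specializing that action to $\Bbbk$. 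Second, surjectivity of $\mathbb{N}\to[\mathcal{N}]$ does not follow merely from every object being a summand of some $B_{\underline{x}}$; it comes from the unitriangularity $[B_{\underline{w}}]=[B^I_w]+\sum_{y<w}p_y[B^I_y]$ that you establish in your final step.
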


\begin{rem}
A simpler version of Theorem \ref{thm:antispherLLbasis} first appeared in \cite[Theorem~1.10]{BHN-modularWeylKac}. 
It was used to show that the anti-spherical category for Cartan realizations and geometric realizations is well behaved in positive characteristic \cite[Example~1.11]{BHN-modularWeylKac}. 
A related idea to justify defining the $p$-canonical basis via $p$-modular systems was described in \cite[\S 3.3]{gjw-calcpcanbasis}.
\end{rem}

\begin{rem} \label{rem:locpres}
The linear independence argument in Claim \ref{clm:LLlinindep} is completely new, and applies to the Elias--Williamson diagrammatic category $\mathcal{D}_{\rm BS}$ with no change.
As written it is only relevant when $2$ vanishes in $\Bbbk$, but the same type of argument works in other settings where roots vanish, such as anti-spherical Hecke categories (as in Theorem \ref{thm:antispherLLbasis}) and cyclotomic Hecke categories (analogous to categories of Soergel modules). 

Indeed, it is worth understanding how to reinterpret localization in such settings.
Carrying over the notation from Claim \ref{clm:LLlinindep} let $\Lambda' : \mathcal{H}'_{\rm BS} \rightarrow (\Omega_{Q'} W)_{\oplus}$ be the localization functor for $\mathcal{H}'_{\rm BS}$, as defined in \cite{ew-loccalc}. 
Let $\mathcal{M}'$ denote the image of $\Lambda'$ inside $(\Omega_{Q'} W)_{\oplus}$, which is isomorphic to $\mathcal{H}'_{\rm BS}$ because localization is faithful.
It has a concrete presentation in terms of matrices with entries in the fraction field $Q'$ of $R'$. 
Now let 
\begin{equation}
\Lambda = R \otimes_{R'} \Lambda' : R \otimes_{R'} \mathcal{H}'_{\rm BS} \xrightarrow{\sim} R \otimes_{R'} \mathcal{M}'
\end{equation}
This gives a faithful presentation of $\mathcal{H}_{\rm BS}$ in terms of a matrix algebra in which the scalars have been changed. 
Note in particular that the target of $\Lambda$ does \emph{not} lie in $Q \otimes_{Q'}(\Omega_{Q'} W)_{\oplus} = (\Omega_{Q} W)_{\oplus}$, where $Q$ denotes the fraction field of $R$. 
In other words, we cannot interpret the image of $\Lambda$ as a matrix algebra with entries in $Q$. 
For example, if $\alpha_s=0$ in $R$ then
\begin{equation*}
1 \otimes 
\begin{pmatrix}
\alpha_s & 0
\end{pmatrix} = \alpha_s \otimes 
\begin{pmatrix}
1 & 0
\end{pmatrix}
=0
\end{equation*}
in $Q \otimes_{Q'} (\Omega_{Q'} W)_{\oplus}$, yet we still have
\begin{equation*}
\Lambda({\rm dot}_s)= 1 \otimes 
\begin{pmatrix}
\alpha_s & 0
\end{pmatrix} \neq 0
\end{equation*}
since $\begin{pmatrix} 1 & 0 \end{pmatrix} \notin \mathcal{M}'$. 
\end{rem}

\printbibliography
\end{document}